\newtheorem{thm}{Theorem}
\newtheorem{lem}[thm]{Lemma}
\newtheorem{cor}[thm]{Corollary}
\newtheorem{defi}[thm]{Definition}
\newtheorem{quest}[thm]{Question}
\newtheorem*{claim*}{Claim}
\newcommand{\explode}{\divideontimes}
\newcommand{\cP}{{\mathcal{P}}}
\newcommand{\cI}{{\mathcal{I}}}
\newcommand{\cC}{{\mathcal{C}}}
\title{Degree criteria and stability for independent transversals}
\author{Penny Haxell\thanks{Department of Combinatorics and
    Optimization, University of Waterloo, Waterloo ON
    Canada. Partially supported by NSERC}, Ronen
  Wdowinski\thanks{Department of Combinatorics and 
    Optimization, University of Waterloo, Waterloo ON
    Canada.}}
\date{\today}
\begin{document}

\maketitle

\begin{abstract} 
An \emph{independent transversal} (IT) in a graph $G$ with a given vertex
partition $\cP$ is an independent set of vertices of $G$ (i.e. it
induces no edges), that consists of one vertex from each part
(\emph{block}) of $\cP$. Over the years, various 
criteria have been established that guarantee the
existence of an IT, often given in terms of $\cP$ being
$t$-\emph{thick}, meaning all blocks have size at least $t$. One such
result, obtained recently by Wanless and Wood, is based on the \emph{maximum
  average block degree} 
$b(G,\cP)=\max\{\sum_{u\in U}d(u)/|U|:U\in\cP\}$. They proved
that if $b(G,\cP)\leq t/4$ then an IT exists. Resolving
a problem posed by  Groenland, Kaiser, Treffers and Wales (who showed
that the ratio $1/4$ is best possible), here we give a
full characterization of pairs $(\alpha,\beta)$ such that the
following holds for every $t>0$: whenever $G$ is a 
graph with maximum degree
 $\Delta(G)\leq\alpha t$, and $\cP$ is a $t$-thick vertex partition
of $G$ such that $b(G,\cP)\leq\beta t$, there exists an IT of $G$ with
respect to $\cP$. Our proof makes use of another
previously known criterion
for the existence of IT's that involves the topological connectedness
of the independence complex of graphs, and establishes a general
technical theorem on the structure of graphs for which this parameter
is bounded above by a known quantity. Our result interpolates
between the criterion $b(G,\cP)\leq t/4$ and the old and frequently 
applied theorem that if $\Delta(G)\leq t/2$ then an IT exists. Using
the same approach, we 
also extend a theorem of Aharoni, Holzman, Howard and Spr\"ussel, by giving a
stability version of the latter result.
\end{abstract}

\section{Introduction}

Given a graph $G$ and partition $\mathcal{P} = \{U_1, \ldots, U_r\}$
of its vertex set $V(G)$ into \textit{blocks} $U_i$, an
\textit{independent transversal} (IT) of 
$G$ with respect to $\mathcal{P}$ is an independent set $\{u_1,
\ldots, u_r\}$ of $G$ such that $u_i \in U_i$ for all $i \in [r]$. In
the literature, an independent transversal has also been called an
\textit{independent system of representatives} or a \textit{rainbow
  independent set}.

Many important notions in mathematics can be
described in terms of a suitably chosen graph and vertex partition
having an IT (see e.g.~\cite{Ha3,Habcc} and the references therein),
and accordingly there has been 
much work over the years in proving sufficient
conditions for a vertex-partitioned graph $G$ to have an IT. Many of these
criteria require the block sizes to be large enough with respect to
certain parameters of $G$, and the techniques used to prove such
results have included
purely combinatorial arguments (e.g.~\cite{BeHaSz,Ha1,Ha2,HaSz}),
topological methods 
(e.g.~\cite{AhBe1,AhBeZi,AhChKo,AhHa,Me1,Me2}), arguments using the
Lov\'asz Local Lemma  
and its variants (e.g.~\cite{Al,AlSp,KaKe,LoSu}), and counting arguments
(e.g.~\cite{WaWo}). When no special information 
is known about the structure of the graph with respect to the vertex
partition, the combinatorial and topological methods appear to work
best, giving best possible results in many cases. One example
is the following~\cite{Ha1,Ha2}, where a partition $\cP$ is said to be
$t$-\textit{thick} if each of its blocks has size at least $t$, and as
usual $\Delta(G)$ denotes the maximum degree of $G$. (This is one
of the most frequently applied IT theorems, see
e.g.~\cite{GrHa} and the references therein.)

\begin{thm} \label{max-degree-IT}
Let $G$ be a graph with a $t$-thick vertex partition $\cP$.
If $\Delta(G)\leq t/2$ then $G$ has an IT with
respect to $\mathcal{P}$. 
\end{thm}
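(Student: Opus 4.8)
We may assume $\Delta := \Delta(G) \geq 1$, since otherwise $G$ has no edges and any transversal (one exists because the blocks are nonempty) is trivially independent. I would use an extremal argument. Fix a partial independent transversal $I$ of $G$ with respect to $\cP$ covering the largest possible number of blocks, and suppose for contradiction that some block $U_0$ is uncovered; the goal is to derive a contradiction by building an ``alternating tree'' of blocks rooted at $U_0$ whose existence conflicts with every block having size $\geq t \geq 2\Delta$.

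The seed observation is that every vertex of $U_0$ has a neighbour in $I$: otherwise adding such a vertex to $I$ would cover one more block, contradicting maximality. This starts the tree --- pick a ``probe'' $x_{U_0} \in U_0$, and the block of one of its $I$-neighbours becomes a child of $U_0$ --- and the same move (swap out a representative, recover its block with a fresh probe, repeat) drives the growth. Formally $T$ is a rooted tree whose nodes are blocks: the root is $U_0$; every non-root node $B$ is a covered block carrying the $I$-representative $v_B$ linking it to its parent together with a probe $x_B \in B \setminus \{v_B\}$; and the children of a node are the blocks of the $I$-neighbours of its probe that do not already lie in $T$. The heart of the construction is the choice of $x_B$: it must avoid $v_B$ and all $G$-neighbours of the probes of the ancestors of $B$, so that replacing, along the path from $B$ to the root, each representative by the corresponding probe yields an independent set. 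This is exactly where $2\Delta$-thickness is spent: whenever a leaf is still ``active'' the forbidden vertices in it number fewer than its size, so a valid probe exists; and whenever a leaf's probe has no $I$-neighbour outside $T$, the path-swap from that leaf up to $U_0$ yields a partial transversal covering one extra block, contradicting maximality of $I$.

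It remains to analyse the terminal configuration, where the tree cannot grow and no leaf produces such a swap. I expect this to force a nonempty subcollection $\cC$ of the blocks of $T$ --- built from the ``stalled'' leaves together with suitable ancestors --- to be \emph{dominated} by a set $D$ of at most $2|\cC| - 1$ vertices, namely certain representatives and probes occurring in $T$ (the ``$-1$'' reflecting that the root carries no representative). But $t$-thickness gives $\big|\bigcup_{U \in \cC} U\big| \geq t|\cC| \geq 2\Delta|\cC|$, whereas each vertex of $D$ dominates at most $\Delta$ vertices, so $\big|\bigcup_{U \in \cC} U\big| \leq |D|\,\Delta \leq (2|\cC|-1)\Delta < 2\Delta|\cC|$ using $\Delta \geq 1$ --- a contradiction, completing the proof.

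The main obstacle is precisely this bookkeeping: the tree-growth rule and the probe-selection rule must be defined in lockstep so that (i) a successful leaf genuinely yields an independent transversal after the swap, and (ii) a stalled tree genuinely yields a domination configuration tight enough that it is the bound $t \geq 2\Delta$, rather than a weaker hypothesis such as $t \geq 2\Delta + 1$, that is contradicted. Alternatively, one can bypass all of this via the topological connectivity criterion for independence complexes used later in the paper: a union of any $k$ of these blocks has at least $2\Delta k$ vertices and maximum degree at most $\Delta$, which makes its independence complex connected enough for that criterion to deliver an IT --- but the combinatorial route above is the one I would carry out in detail.
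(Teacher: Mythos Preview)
Your plan is the combinatorial augmenting-tree method of~\cite{Ha1,Ha2}, which is a genuinely different route from the paper's. Here Theorem~\ref{max-degree-IT} is obtained topologically (Corollary~\ref{max-degree-IT-corollary}): one shows $\eta(H)\ge |V(H)|/(2\Delta(H))$ for every $H$ by a short induction driven by Meshulam's deletion/explosion inequality (Theorem~\ref{extremal-graphs}), and then invokes the topological Hall criterion Theorem~\ref{exists-IT} on each $G_S$. You identify this alternative in your final sentence, and it is exactly what the paper does. The trade-off is as you would expect: your approach is elementary and algorithmic (a failed search returns an explicit obstruction), whereas the topological proof is a few lines once the $\eta$-machinery is in place, and the same induction delivers the extremal characterisation (equality only for disjoint unions of $K_{d,d}$) essentially for free --- something the tree argument does not naturally produce.

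One concrete issue in your sketch, since you yourself flag the bookkeeping as the obstacle: the path-swap triggered by ``a leaf's probe has no $I$-neighbour outside $T$'' does not yield an independent set as written. An interior probe $x_{B_i}$ on the root-to-leaf path is adjacent to $v_C$ for \emph{every} child $C$ of $B_i$ (that is how those blocks entered $T$), not only the on-path child $B_{i+1}$; the off-path representatives $v_C$ survive a path-only swap and collide with $x_{B_i}$. The published combinatorial proofs handle this by carrying, for each node $B$ of $T$, a rotated partial IT $I_B$ (agreeing with $I$ outside $T$) that already uncovers $B$ in favour of $U_0$, and then choosing probes and growing the tree relative to these $I_B$ rather than the fixed $I$; with that adjustment the final domination count of the shape you describe can be made to work.
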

Szab\'o and
Tardos~\cite{SzTa} (see also~\cite{Ji,Yu}) proved that
Theorem~\ref{max-degree-IT} is best possible, by giving, for each $d$, a
$(2d-1)$-thick partition $\cP$ of the union of $2d-1$ disjoint copies of the
complete bipartite graph $K_{d,d}$ that does not have an IT with
respect to $\cP$.

When information about the interaction between the graph and the
vertex partition $\cP$ is known,  
in particular when there is some limit on the number of edges between any
pair of blocks or the number of edges incident to any particular block,
then the other techniques mentioned tend to give stronger results. One
recent example of this
phenomenon, due to Wanless and Wood~\cite{WaWo} (see also Kang and
Kelly~\cite{KaKe}), considers the IT 
problem with respect to the \emph{maximum average block degree}
$b(G,\cP)=\max\{\sum_{u\in U}d(u)/|U|:U\in\cP\}$.
\begin{thm} \label{WWthm}
Let $G$ be a graph with a $t$-thick vertex partition $\cP$.
If $b(G,\cP)\leq t/4$ then $G$ has an IT with
respect to $\mathcal{P}$. 
\end{thm}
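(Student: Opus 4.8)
The plan is to prove Theorem~\ref{WWthm} by the maximal-partial-IT method, keeping track of vertex degrees throughout. Two preliminary remarks. The value $1/4$ has a first-moment explanation: if a transversal is formed by choosing one vertex uniformly and independently from each block, then the vertex $x$ chosen in $U_i$ is adjacent to some other chosen vertex with probability at most $\sum_{j\ne i}e(U_i,U_j)/(|U_i|\,|U_j|)\le(t\,|U_i|)^{-1}\sum_{u\in U_i}d(u)\le 1/4$, so each block is individually \emph{safe} with probability at least $3/4$; the snag is that a union bound over the $r$ blocks is useless once $r>4$, which forces the argument to be structural rather than probabilistic. A direct reduction to Theorem~\ref{max-degree-IT} also seems to lose a constant: deleting from each block the fewer than $|U|/2$ vertices of degree exceeding $t/2$ leaves blocks of size exceeding $t/2$ with maximum degree at most $t/2$, yet Theorem~\ref{max-degree-IT} at thickness $t/2$ demands maximum degree at most $t/4$, and tuning the deletion threshold $\theta$ amounts to minimising $2\theta/t+t/(4\theta)$, whose minimum is $\sqrt 2>1$. (If $t<4$ there is nothing to prove: then $\sum_{u\in U}d(u)<|U|$ for every block, so each block has an isolated vertex, and one from each block forms an IT; so assume $t\ge 4$.)

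Now suppose $G$ has no IT. Among all independent sets of $G$ meeting a maximum number of blocks, pick one, $S$, minimising $\sum_{s\in S}d(s)$, and let $\mathcal M\ne\emptyset$ be the set of blocks missed by $S$; by maximality $W\subseteq N(S)$ for every $W\in\mathcal M$. In the simplest case, no single exchange along a missed block can be continued, and then $\bigcup\mathcal M\subseteq N(S)$ gives
\[
t\,|\mathcal M|\le\sum_{W\in\mathcal M}|W|\le\Bigl|N(S)\cap\bigcup\nolimits_{W\in\mathcal M}W\Bigr|\le\sum_{s\in S}d(s).
\]
Here the extremal choice of $S$ is the point: if a representative $s\in U_i$ could be swapped for a minimum-degree vertex of $U_i$ while keeping $S$ independent and its set of covered blocks unchanged, minimality forces $d(s)=\min_{u\in U_i}d(u)\le|U_i|^{-1}\sum_{u\in U_i}d(u)\le t/4$, so the right-hand side is at most $|S|t/4\le rt/4$ and $|\mathcal M|\le r/4$ — close to, but short of, a contradiction. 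Closing this gap needs the full alternating-tree construction: rooted at a missed block, at each encountered block of $S$ every vertex has a neighbour in the current partial transversal, one branches over the vertex and the neighbour, minimality rules out genuine augmentations, and the same degree accounting is then run at the leaves of the tree rather than on $S$ itself.

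The main obstacle is the interaction between the tree and this accounting. When an augmenting step forces a vertex $w$ of a missed block \emph{into} the partial transversal, we control neither $d(w)$ nor how many later blocks $w$ dominates: one high-degree vertex can dominate an entire block (as in a disjoint union of stars $K_{1,m}$ with $m$ large), so the estimate that every representative has degree at most $t/4$ decays as the tree grows. To handle this I would carry a potential of the shape $\Phi(S)=\sum_{s\in S}d(s)-\tfrac t4\cdot(\text{number of blocks met by }S)$ and show that each step of the construction either terminates in a genuine IT or strictly decreases $\Phi$, so that the process halts and the terminal value of $\Phi$ yields the contradiction; calibrating the coefficient $t/4$ so that both the augmenting moves and the final count survive is the delicate part, and I expect this is precisely where the sharpness of $1/4$ enters. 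A second route would invoke the topological criterion — $G$ has an IT provided $\eta(\cI(G[\bigcup\cF]))\ge|\cF|$ for every nonempty $\cF\subseteq\cP$ — and prove a Meshulam-style inductive lower bound on $\eta$; but a naive degree-based estimate such as $\eta(\cI(H))\ge\sum_v(2d(v)+2)^{-1}$ is again false for stars, so that induction, too, must see the partition structure and not merely the degree sequence.
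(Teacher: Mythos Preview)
First, note that the paper does not itself prove Theorem~\ref{WWthm}: it is quoted from Wanless and Wood~\cite{WaWo} (with an alternative derivation via Bernshteyn's Local Cut Lemma noted by Kang--Kelly~\cite{KaKe}) and then used as a black box in the proof of Theorem~\ref{goodpair}. The Wanless--Wood argument is a short recursive \emph{counting} in the style of Rosenfeld: ordering the blocks, one shows that the number of partial ITs on the first $i$ blocks is at least $t/2$ times the number on the first $i-1$, giving in fact at least $(t/2)^{|\cP|}$ independent transversals. Neither alternating trees nor the connectedness of the independence complex enters that proof.

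Your proposal, by contrast, is not a proof but a plan, and you flag its gaps yourself. The alternating-tree/potential-function route founders exactly where you say it does: once a vertex $w$ of a missed block is forced into the partial transversal you have no control over $d(w)$, so your potential $\Phi(S)=\sum_{s\in S}d(s)-\tfrac t4\,|\{\text{blocks met}\}|$ can jump by an arbitrary amount in a single step (your own star example), and you supply no mechanism to absorb this. There is no evidence the coefficient $t/4$ can be tuned to make the scheme close; the alternating-tree machinery is what underlies Theorem~\ref{max-degree-IT}, and the forests of Theorem~\ref{GKTWthm} --- with $b(G,\cP)$ near $t/4$ but maximum degree $t$ and no IT --- show that any argument blind to how a single high-degree vertex sits relative to the partition is in trouble. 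You have already (correctly) ruled out the naive Meshulam-type bound, and the paper's own density estimate (Theorem~\ref{dense}) does not recover Theorem~\ref{WWthm} either, since it requires a maximum-degree hypothesis. The counting argument is the right tool here and is short; that is the proof to write out.
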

Kang and Kelly noted how this result can be derived using the Local Cut Lemma of
Bernshteyn~\cite{Be} (see also~\cite{DvEsKaOz}), whereas Wanless and Wood
used a counting argument
similar to that of Rosenfeld~\cite{Ro}. In fact Wanless and Wood
showed that the number of IT's given
by Theorem~\ref{WWthm} is at
least $(t/2)^{|\cP|}$.
\paragraph{Degree considerations.}
In view of Theorem~\ref{max-degree-IT}, it is 
natural to ask whether the $t/4$ in Theorem~\ref{WWthm} can be
improved to $t/2$, and indeed this is stated as an open problem in
both~\cite{KaKe} and~\cite{WaWo}. 
It was resolved by the following theorem of Groenland,
Kaiser, Treffers and Wales~\cite{GrKaTrWa}.
\begin{thm} \label{GKTWthm}
For every $\epsilon>0$ and all sufficiently large $t$, there exists a
forest $F$ and a $t$-thick 
partition $\cP$ of $F$ such that $b(F,\cP)\leq(1+\epsilon)t/4$ and $F$
has no IT with respect to $\cP$.
\end{thm}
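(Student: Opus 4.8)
The plan is to build, for each large $t$ and each $\epsilon>0$, an explicit vertex-partitioned forest with no IT whose maximum average block degree lies just above $t/4$, by \emph{diluting} the classical star obstruction. That obstruction is the disjoint union of $t$ stars $K_{1,t}$ with centres $c_1,\dots,c_t$; taking $C=\{c_1,\dots,c_t\}$ and the $t$ leaf-sets as the blocks of $\cP$ gives a $t$-thick forest with no IT (any choice from $C$ selects some $c_i$, whose leaf-block lies entirely in $N(c_i)$ and so cannot be covered), but $b(G,\cP)=t$, since $C$ consists of $t$ vertices of degree $t$. The whole task is to replace $C$ by a \emph{critical block} that is still impossible to cover in an IT but has average degree only $(1+\epsilon)t/4$. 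Since a block in which every vertex, when chosen, must delete an entire other block necessarily has $t$ vertices of degree $\ge t$ and hence average degree $\ge t$, this forces a recursive construction in which most vertices of a critical block are \emph{blocked} cheaply.

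Concretely, I would run a recursion on a depth parameter $k$ producing partitioned forests $G_k$ that carry a distinguished \emph{pivot} vertex $z_k$ lying in every IT of $G_k$, while keeping $b(G_k,\cP_k)\le(1+\epsilon)t/4$. In the inductive step one introduces a new block $W_k$ consisting of $z_k$ together with about $t-1$ further vertices, all blocked: a fraction of at most $\tfrac14+o(1)$ of them are \emph{heavy} blockers, each joined to a fresh leaf-block of size $t$ (so of degree $\ge t$; the roughly $\tfrac14 t$ heavy blockers contribute about $\tfrac14 t^2$ to the degree-sum of $W_k$, i.e. about $\tfrac14 t$ to its average), and the remaining $\tfrac34-o(1)$ fraction are \emph{light} blockers of degree $1$, each joined to the pivot $z_{k-1}$ of its own disjoint copy of $G_{k-1}$ (choosing such a vertex deletes that copy's pivot and destroys all of its ITs). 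One checks that $W_k$ then cannot be covered in any IT, so $z_k$ is a pivot of $G_k$, and that every other block — the fresh leaf-blocks (average degree $1$) and the blocks inside the copies of $G_{k-1}$ (average degree $\le(1+\epsilon)t/4$ by induction, up to the harmless $+1$ from attaching a light blocker) — meets the bound. A final level adds one more block $A=\{v_1,\dots,v_t\}$ of degree-$1$ vertices, with $v_i$ joined to the pivot of one of $t$ disjoint copies of $G_K$; then $A$ is uncoverable, so the resulting forest $F$ has no IT, is $t$-thick, and satisfies $b(F,\cP)\le(1+\epsilon)t/4$.

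The main obstacle is to close off this recursion without paying a large price: a naive base gadget carrying a pivot but using no subgadgets is forced to make \emph{all} of the $t-1$ companions of $z_0$ heavy blockers, producing a block of average degree close to $t$ and ruining the estimate. Making the construction go through therefore requires either a more delicate base case or a design in which the dependencies among blocks are allowed to ``wrap around'' (the forest must be acyclic, but the pattern of blocks forcing one another need not be), together with a careful accounting of how the expensive whole-block blocking is amortised over the levels, so that the $\tfrac14$-fraction of heavy blockers is genuinely maintained everywhere, including at the bottom; this amortisation is exactly what pins the constant at $\tfrac14$ rather than some larger value. A secondary but essential point is to track block sizes throughout, padding critical and leaf-blocks with extra degree-$0$ or degree-$1$ vertices to keep $\cP$ precisely $t$-thick, and to verify that these adjustments and the various $+1$'s to degrees perturb the maximum average block degree by only $o(t)$, which the factor $1+\epsilon$ absorbs.
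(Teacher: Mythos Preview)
You correctly isolate the crux—the base of your pivot recursion would need a block whose $t-1$ non-pivot vertices are all heavy, giving average degree close to $t$—but you do not resolve it, and neither suggested escape works as stated. The ``wrap-around'' idea fails outright: if the light blockers in $B_i$ are attached to the pivot $z_{i+1}$ of $B_{i+1}$ cyclically, then choosing one light blocker from every $B_i$ and one leaf from every leaf-block is a genuine IT, since each selected vertex has degree $1$ and its unique neighbour (a pivot or a heavy centre) is never selected. So the cyclic design does admit an IT and the no-IT argument collapses. And a ``more delicate base case'' carrying a pivot while already satisfying $b\le(1+\epsilon)t/4$ is precisely the content of the theorem; invoking one without building it is circular.

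The construction of Groenland, Kaiser, Treffers and Wales (reproduced in the Appendix of the paper) avoids the difficulty by dropping your heavy/light dichotomy. Instead of stars $K_{1,t}$, it uses stars $K_{1,d_j}$ for an increasing sequence $d_1<d_2<\cdots<d_k=t$ with $d_1\approx t/4$. The initial block is simply the set of $t$ centres of $t$ disjoint copies of $K_{1,d_1}$, which already has average degree $d_1\approx t/4$ with no recursion needed; this is the missing base. Each subsequent block at level $j$ combines $d_j$ degree-$1$ leaves with $t-d_j$ new star centres of degree $d_{j+1}$, and one checks $\frac{1}{t}\bigl(d_j+(t-d_j)d_{j+1}\bigr)\le(1+\epsilon)t/4$; taking $d_{j+1}=d_j+1$ gives $1+d_j(t-d_j)/t\le 1+t/4$. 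The key idea you are missing is that the blocker degrees climb gradually from about $t/4$ up to $t$ across the levels rather than being fixed at $t$, so the bottom level is cheap directly and no amortisation or wrap-around is required.
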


(We remark that adding the further
assumption that 
no two blocks of $\cP$ induce a subgraph of $G$ of maximum degree
more than $o(t)$ allows the $t/4$ in Theorem~\ref{WWthm} to be
strengthened all the way to $t-o(t)$ (Glock-Sudakov~\cite{GlSu},
Kang-Kelly~\cite{KaKe}).)

In the main construction giving Theorem~\ref{GKTWthm}, the maximum 
degree $\Delta(F)$ is $t$. In~\cite{GrKaTrWa} the authors speculate
whether it can be 
brought down substantially, and in this direction they
provide a modified construction that is no longer a forest, but has
maximum degree $\alpha t$ with $\alpha$ asymptotically
$\frac12+\frac1{2\sqrt2}<0.854$. Again with 
reference to 
Theorem~\ref{max-degree-IT}, they ask (Problem 12 in~\cite{GrKaTrWa}) 
whether a construction with maximum degree arbitrarily close to $t/2$
is possible. More generally they asked the following.

\begin{quest}\label{GQ}
  What can be said about the set of pairs $(\alpha,\beta)\in[0,1]^2$ such
that for sufficiently large $t$, there exists a graph $G$ with
$\Delta(G)\leq\alpha t$ and a
$t$-thick partition $\cP$ such that $b(G,\cP)\leq\beta t$ and no IT of
$G$ with respect to $\cP$ exists?
\end{quest}
One of the main aims of this paper is to give a complete answer to
Question~\ref{GQ}. To describe it we introduce notion of a
\textit{good pair}.

\begin{defi}\label{defgoodpair}
A pair $(\alpha,\beta)$ with $0<\beta\leq\alpha$ is
a \emph{good pair} if the following holds for every $t>0$: whenever $G$ is a
graph with 
 $\Delta(G)\leq\alpha t$, and $\cP$ is a $t$-thick vertex partition
of $G$ such that $b(G,\cP)\leq\beta t$, there exists an IT of $G$ with
respect to $\cP$.
\end{defi}
Our first main theorem gives a characterisation of the set of good pairs.

\begin{thm}\label{goodpair}
  The pair $(\alpha,\beta)$ with
  $0<\beta\leq\alpha$ 
  is good if and only if one of the following holds.
  \begin{enumerate}
  \item $\alpha\leq 1/2$,
  \item $\beta\leq 1/4$,
    \item $\beta\leq2\alpha(1-\alpha)$.
  \end{enumerate}
\end{thm}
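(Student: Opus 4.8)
The plan is to prove both implications of the characterisation. For the ``if'' direction, conditions~(1) and~(2) are immediate consequences of Theorems~\ref{max-degree-IT} and~\ref{WWthm} respectively (if $\alpha\le 1/2$ then $\Delta(G)\le\alpha t\le t/2$, and if $\beta\le1/4$ then $b(G,\cP)\le\beta t\le t/4$), so the real content of sufficiency is condition~(3), which only adds new good pairs when $\alpha>1/2$. Here I would invoke the known topological criterion for IT's: writing $\eta(H)$ for two plus the topological connectedness of the independence complex $\cI(H)$, the topological version of Hall's theorem (Aharoni--Berger--Ziv) guarantees an IT of $G$ as soon as $\eta\big(G[\bigcup\mathcal{W}]\big)\ge|\mathcal{W}|$ for every subfamily $\mathcal{W}\subseteq\cP$ of blocks. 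So suppose for contradiction that $\Delta(G)\le\alpha t$, $\cP$ is $t$-thick, $b(G,\cP)\le 2\alpha(1-\alpha)t$, and $G$ has no IT, and fix a witness $\mathcal{W}\subseteq\cP$ with $m:=|\mathcal{W}|$ and $\eta(H)\le m-1$, where $H=G[\bigcup\mathcal{W}]$; note $|V(H)|\ge mt$, $\Delta(H)\le\alpha t$, and every block of $\mathcal{W}$ has average $H$-degree at most $2\alpha(1-\alpha)t$.

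The crux is then the structural theorem alluded to in the abstract: a graph $H$ with $\eta(H)\le k$ must essentially be a disjoint union of at most $k$ complete bipartite graphs — say, up to a controlled number of exceptional vertices, $V(H)$ partitions into at most $\eta(H)$ parts each inducing a complete bipartite graph $K_{A_i,B_i}$. Given this, $|A_i|,|B_i|\le\Delta(H)\le\alpha t$, so each part has at most $2\alpha t$ vertices; moreover a part of size close to $2\alpha t$ is forced to be a near-balanced biclique $K_{\approx\alpha t,\approx\alpha t}$, all of whose vertices have degree $\approx\alpha t$, which (as any block has $\ge t$ vertices and $\alpha>\beta$) would drive some block's average degree up to $\approx\alpha t>\beta t$. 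Optimising the size constraint $|V(H)|\ge mt$ with at most $m-1$ parts against the requirement that all $m$ blocks have average degree $\le\beta t$ pins down the threshold precisely: the extremal configuration is $H$ a union of $\approx m-1$ copies of $K_{(1-\alpha)t,\,\alpha t}$ — whose per-block degree average is exactly $2\cdot\alpha t\cdot(1-\alpha)t/t=2\alpha(1-\alpha)t$ — and one checks that as soon as $\beta\le 2\alpha(1-\alpha)$ this forces $\eta(H)\ge m$, contradicting the choice of $\mathcal{W}$. (This same structural theorem is what then powers our stability version of the Aharoni--Holzman--Howard--Spr\"ussel result.)

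Proving the structural theorem is where I expect the bulk of the work to lie. The natural engine is the Meshulam inductive inequality $\eta(H)\ge\min\{\eta(H-v),\,1+\eta(H-N_H[v])\}$: one deletes a carefully chosen vertex at each step — isolated vertices for free, otherwise a closed neighbourhood at the cost of one unit of $\eta$ — and the goal is to arrange the deletion order so that, whenever the resulting lower bound on $\eta(H)$ is close to tight, the deleted closed neighbourhoods reassemble into the claimed complete bipartite pieces. Making the bookkeeping sharp enough to reach the value $2\alpha(1-\alpha)$, rather than merely the crude bound arising from $\eta(H)\ge|V(H)|/2\Delta(H)$ (which would only recover $\alpha\le1/2$), amounts to proving a \emph{stability} form of the Meshulam bound, and this is the main obstacle I anticipate.

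For the ``only if'' direction I must, for each pair violating all three conditions (so $\alpha>1/2$, $\beta>1/4$, and $\beta>2\alpha(1-\alpha)$), exhibit for some $t$ a graph $G$ with $\Delta(G)\le\alpha t$, a $t$-thick $\cP$ with $b(G,\cP)\le\beta t$, and no IT. Since being a bad pair is monotone in both coordinates, it suffices to produce such examples with $\beta$ arbitrarily close to $\max\{1/4,\,2\alpha(1-\alpha)\}$ from above, i.e. to cover the boundary curve. The building block is the single biclique $K_{(1-\alpha)t,\,\alpha t}$, which on its $t$ vertices has maximum degree exactly $\alpha t$ and total degree $2\alpha(1-\alpha)t^2$; I would splice $\approx m-1$ of these gadgets into $m$ blocks of size $\ge t$ using the ``threaded'' partition underlying the Szab\'o--Tardos construction that shows Theorem~\ref{max-degree-IT} is tight (which is the balanced $\alpha=\beta=1/2$ case) and the Groenland--Kaiser--Treffers--Wales constructions behind Theorem~\ref{GKTWthm} and their $\alpha\approx\tfrac12+\tfrac1{2\sqrt2}$ refinement (essentially the $\beta=1/4$ endpoint), so that no IT exists; the interpolating family sweeps out the curve $\beta=2\alpha(1-\alpha)$ for $\alpha\in(1/2,\tfrac12+\tfrac1{2\sqrt2}]$, while for larger $\alpha$, where the threshold is just $1/4$, the GKTW construction (having $\Delta\approx(\tfrac12+\tfrac1{2\sqrt2})t\le\alpha t$) already suffices. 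Checking the degree and thickness bounds is a direct computation; verifying that the interpolated construction has no IT, by adapting the arguments for the two known endpoint constructions, is the delicate point of this direction.
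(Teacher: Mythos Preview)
Your architecture matches the paper's: invoke the topological Hall criterion (Theorem~\ref{exists-IT}), prove a structural result for graphs with small $\eta$, and build constructions from bicliques for the converse. Two points in your sufficiency argument for~(3) need correction.

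The inequality you name as your engine, $\eta(H)\ge\min\{\eta(H-v),\,1+\eta(H-N_H[v])\}$, is the \emph{vertex} form of Meshulam's lemma. It is valid, but each explosion removes only a closed neighbourhood of at most $\Delta+1$ vertices, so iterating gives only $\eta(H)\ge (|V(H)|-1)/(\Delta+1)$ --- strictly weaker than the bound $|V(H)|/(2\Delta)$ you yourself quote, and insufficient to reach the threshold $2\alpha(1-\alpha)$. The paper uses the \emph{edge} form (Theorem~\ref{Meshulam}): after reducing so that every remaining edge is explodable, one explodes an edge $xy$ minimising $|N(x)\cup N(y)|$. The minimality then forces every $w\in N(x)$ to have at least $|N(y)\setminus N(x)|$ neighbours outside $N(x)$ (else $xw$ would beat $xy$), and symmetrically for $w\in N(y)$; these comparison bounds are precisely what drive the key inequality Theorem~\ref{key}(b). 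There is no analogue of this comparison for vertex explosions.

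Separately, your passage from the structural theorem to the threshold is off: the claim that a near-balanced biclique piece ``would drive some block's average degree up to~$\approx\alpha t$'' conflates the biclique decomposition with the IT partition $\cP$ --- a block of $\cP$ can meet many biclique pieces, so no individual block need have high average degree. The correct link is global: $b(G,\cP)\le\beta t$ gives $2|E(G_S)|\le\beta t\,|V(G_S)|$, while the structural theorem is distilled into the edge-count lower bound $|E(H)|\ge dn-d^2k$ whenever $\Delta(H)=d$, $|V(H)|=n$, and $\eta(H)\le k$ (Theorem~\ref{dense}). Combining these with $|V(G_S)|\ge t|S|$ yields $\beta>2\gamma(1-\gamma)$ for $\gamma:=\Delta(G_S)/t\in(\tfrac12,\alpha]$, and hence $\beta>2\alpha(1-\alpha)$. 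Your ``optimising'' step should be replaced by this direct edge-count comparison.
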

In particular, Theorem~\ref{goodpair} confirms that the value
$\frac12+\frac1{2\sqrt2}$ obtained in~\cite{GrKaTrWa}
is best possible in their setting, i.e. when $\beta$ is asymptotically
$1/4$.

Theorem~\ref{goodpair} can be viewed as an
interpolation between Theorem~\ref{max-degree-IT}
and Theorem~\ref{WWthm}, refining both results when $\alpha>1/2$ and
$\beta>1/4$. Our proof of Theorem~\ref{goodpair}
uses the topological approach of~\cite{AhBe1,AhHa,Me1,Me2} (see
also~\cite{Habcc}), relating the existence of IT's 
in $G$ to the parameter $\eta(G)$, which is defined as the
topological (homotopic) connectedness of the independence complex of
$G$, plus $2$ (see Section~\ref{sec-conn}). This approach is based on
the following topological Hall or Rado theorem, first
proven explicitly in \cite{Me2} and generalized in \cite{AhBe1} (with
variants appearing earlier in \cite{AhHa} and \cite{Me1}). Here for a
subset $S$ of blocks of the partition $\cP=\{U_1, \ldots, U_r\}$ of
$V(G)$, we write $G_S= G \left[ \bigcup_{U_i \in S} U_i
  \right]$ for the subgraph of $G$ induced by the union of the
blocks in $S$.

\begin{thm}\label{exists-IT}
Let $G$ be a graph with vertex partition $\cP$. 
If $\eta(G_S)\geq|S|$ for every subset $S$ of the blocks of $\cP$,
then $G$ has an IT with respect to $\mathcal{P}$. 
\end{thm}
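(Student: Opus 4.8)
The plan is to prove Theorem~\ref{exists-IT} by induction on the number $r=|\cP|$ of blocks. (Equivalently one can prove the purely simplicial statement of which it is the case $\cC=\cI(G)$: if the vertex set of a complex $\cC$ is partitioned into $W_1,\dots,W_r$ and every induced subcomplex $\cC\!\left[\bigcup_{i\in S}W_i\right]$ has $\eta\ge|S|$, then $\cC$ has a face meeting every $W_i$; a face of $\cI(G)$ is an independent set and the induced subcomplex on a union of blocks is $\cI(G_S)$, so the two formulations coincide.) Two preliminary normalisations: first, we may assume each block of $\cP$ is an independent set of $G$, since deleting the edges inside blocks changes neither the set of ITs nor, decreasingly, any of the quantities $\eta(G_S)$ (removing edges only enlarges independence complexes); second, the hypothesis applied to singleton subsets forces every block to be nonempty, which settles the base case $r=1$ (any vertex of the unique block is an IT).

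For the inductive step, fix the block $U_r=\{w_1,\dots,w_k\}$, and for each $j\in[k]$ let $H_j$ denote the subgraph of $G$ induced on $\bigl(\bigcup_{i<r}U_i\bigr)\setminus N(w_j)$, with the induced partition into the $r-1$ blocks $U_i\setminus N(w_j)$. If for some $j$ the graph $H_j$ satisfies the hypothesis of the theorem, then by induction $H_j$ has an IT, which together with $w_j$ is an IT of $G$ (the vertex $w_j$ has no neighbour among the chosen vertices, by construction), and we are done. So we may assume that for every $j$ there is a nonempty $S_j\subseteq[r-1]$ with $\eta\bigl((H_j)_{S_j}\bigr)\le|S_j|-1$; the goal is then to manufacture from these a set $I\subseteq[r]$ with $\eta(G_I)<|I|$, contradicting the hypothesis. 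The topological input is the Mayer--Vietoris bound $\eta(\cC_1\cup\cC_2)\ge\min\{\eta(\cC_1),\eta(\cC_2),\eta(\cC_1\cap\cC_2)+1\}$ for subcomplexes of a complex; applying it with $\cC_1$ the contractible cone $w*\cI(G-N[w])$ (whose link is $\cI(G-N[w])$) and $\cC_2=\cI(G-w)$, inside $\cI(G)=\cC_1\cup\cC_2$, gives Meshulam's lemma $\eta(G)\ge\min\{\eta(G-w),\,\eta(G-N[w])+1\}$. I would peel the vertices $w_1,\dots,w_k$ off $G_I$ one at a time using this lemma, so that an upper bound $\eta(G_I)<|I|$ reduces to showing that each resulting link graph, together with the residual term $\eta(G_{I\setminus\{r\}})$, is suitably small, choosing $I$ to be (a sub-collection of) $\{r\}\cup\bigcup_jS_j$.

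The main obstacle is exactly this amalgamation, and it is where essentially all of the content sits. Two features make it delicate. First, when $w_j$ is removed from $G_I$ via Meshulam's lemma the relevant link is not $H_j$ but $H_j$ together with the still-unpeeled vertices $w_{j+1},\dots,w_k$ of $U_r$, and $\eta$ is \emph{not} monotone under adjoining such vertices, so the order of peeling and the way these leftover vertices attach to the blocks indexed by $S_j$ must be controlled carefully. Second, the obstructing sets $S_j$ vary with $j$, so a single $I$ must be extracted that simultaneously witnesses failure ``above'' every $w_j$, and the residual term $\eta(G_{I\setminus\{r\}})$ must be pinned down using the induction hypothesis (applied to the first $r-1$ blocks) --- or, more cleanly, by passing at the outset to a counterexample minimising $|V(G)|$, which forces $\eta(G_S)=|S|$ exactly for the sets $S$ that actually arise. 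Once this bookkeeping is arranged, repeated application of the Mayer--Vietoris bound delivers $\eta(G_I)\le|I|-1$, contradicting $\eta(G_I)\ge|I|$, and the induction is complete.
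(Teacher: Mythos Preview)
The paper does not prove Theorem~\ref{exists-IT}; it is quoted as a known result (attributed to Meshulam and to Aharoni--Berger, with antecedents in Aharoni--Haxell) and used as a black box throughout. So there is no in-paper argument to compare against, only the literature proofs.

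Your outline has a genuine gap at exactly the place you identify as ``where essentially all of the content sits.'' The Mayer--Vietoris inequality $\eta(\cC_1\cup\cC_2)\ge\min\{\eta(\cC_1),\eta(\cC_2),\eta(\cC_1\cap\cC_2)+1\}$ and its corollary $\eta(G)\ge\min\{\eta(G-w),\eta(G-N[w])+1\}$ are \emph{lower} bounds on $\eta$. You invoke them to manufacture the \emph{upper} bound $\eta(G_I)\le|I|-1$, and that is the wrong direction. From the hypothesis $\eta(G_I)\ge|I|$ together with $\eta(G_I)\ge\min\{\eta(G_I-w_1),\eta(G_I-N[w_1])+1\}$ you cannot conclude that either branch is at least $|I|$: the minimum may simply be smaller than $\eta(G_I)$. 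Conversely, knowing each $\eta((H_j)_{S_j})$ is small does not force $\eta(G_I)$ to be small, since a union of subcomplexes can be far more connected than its pieces or their intersections. So the peeling argument, as you describe it, does not close; the ``bookkeeping'' you defer is not bookkeeping but a missing inequality that does not hold in general.

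For comparison, the proofs in the cited references do not proceed by induction on the last block in this way. They build a map from (a subdivision of) the boundary of the $(r-1)$-simplex with vertex set $\cP$ into $\|\cI(G)\|$, extending skeleton by skeleton: the face indexed by $S\subsetneq\cP$ is sent into $\|\cI(G_S)\|$, and the extension over each such face is possible precisely because $\eta(G_S)\ge|S|$ says the target is $(|S|-2)$-connected. The case $S=\cP$ then fills in the whole simplex, and a Sperner-type or degree argument locates a simplex whose image is a rainbow face of $\cI(G)$, i.e.\ an IT. Here the hypotheses are consumed directly as obstruction-vanishing statements for extending maps, not as inputs to a Mayer--Vietoris amalgamation.
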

We derive (one implication of) Theorem~\ref{goodpair} by first proving
a general technical theorem 
(Theorem~\ref{key} in Section~\ref{sec-key}) about the structure of graphs $H$
for which an upper bound on $\eta(H)$ is known. Theorem~\ref{key} has
the following as a simple consequence.

\begin{thm}\label{dense}
Let $H$ be a graph with $n$ vertices and maximum degree $d$.
Suppose that $\eta(H)\leq k$. Then
$|E(H)|\geq dn-d^2k$.
\end{thm}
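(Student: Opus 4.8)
The plan is to derive this directly from the structural result Theorem~\ref{key}, which (although we have not yet seen its statement) must describe the shape of a graph $H$ with $\eta(H)\le k$. The intuition is this: the parameter $\eta$ is related to topological connectedness of the independence complex, and graphs with small $\eta$ are ``sparse'' in a structured way — for instance, one expects that if $\eta(H)\le k$ then $H$ contains a dominating structure of bounded size, or can be covered by few vertices together with their neighbourhoods in a controlled way. A natural form of Theorem~\ref{key} would assert the existence of a set $W\subseteq V(H)$ with $|W|\le k$ (or perhaps an edge set of size roughly $k$) whose removal, or whose closed neighbourhood, ``controls'' the rest of $H$: concretely, that $V(H)$ is covered by $N[W]$, or that the independence complex structure forces every vertex to lie within distance one of some vertex in a set of size $k$.

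Granting such a statement, I would argue as follows. Let $W$ be the guaranteed set with $|W|\le k$, so that every vertex of $H$ lies in $N[w]$ for some $w\in W$ (i.e. $W$ is a dominating set of size at most $k$, or an endpoint set of a ``fractional'' structure of that size). Since $\Delta(H)\le d$, the closed neighbourhood $N[w]$ has size at most $d+1$, so $|W| + |N(W)\setminus W| \le k(d+1)$; in particular at most $dk$ vertices lie in $N(W)$, hence at least $n - dk - k \ge n-d^2k$ — wait, the bound we want is on edges, not vertices, so let me recalibrate. The cleaner route: count edges incident to $V(H)\setminus N[W]$. Actually the cleanest derivation is a \emph{deficiency} count: every vertex not dominated must be accounted for, but all vertices are dominated, so instead one counts edges. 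Each of the $n$ vertices has degree $\le d$, but a vertex $v$ with $\deg(v) < d$ is ``missing'' $d - \deg(v)$ edges; summing, $2|E(H)| = \sum_v \deg(v) = dn - \sum_v(d-\deg(v))$, so it suffices to bound $\sum_v (d - \deg(v)) \le 2d^2 k$ — equivalently, to bound the number of non-neighbours appropriately. The structural theorem should limit the set of ``low-degree'' or ``exceptional'' vertices to lie within $N[W]$ for $|W|\le k$, and since $|N[W]|\le k(d+1)$ and each such vertex contributes at most $d$ to the deficiency sum, we get $\sum_v(d-\deg(v)) \le d\cdot k(d+1)$, giving roughly $|E(H)| \ge \tfrac12(dn - d^2k) $; adjusting the exact form of Theorem~\ref{key} (e.g. bounding $|E(H)|$ directly by counting edges outside a small controlled set) yields the stated $|E(H)|\ge dn - d^2 k$.

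The main obstacle is matching the precise formulation of Theorem~\ref{key} to make the constants come out exactly as $dn - d^2k$ rather than something weaker like $\tfrac12(dn-d^2k)$ or $dn - O(d^2k)$. This suggests Theorem~\ref{key} is stated not as a vertex-domination statement but more likely as: there is a set $W$ of at most $k$ vertices such that $H - N[W]$ (or $H$ minus the edges touching some size-$k$ set) is $d$-regular, or more plausibly such that every vertex of $H$ outside a small set has \emph{full} degree $d$, with the exceptional set being exactly $N[W]$ for $|W|\le k$. Then $|E(H)| \ge dn - (\text{edges lost at exceptional vertices})$, and each of the at most $dk$ neighbours in $N(W)\setminus W$ together with the $k$ vertices of $W$ can lose at most $d$ edges each — but to land exactly on $d^2 k$ one wants the exceptional set to have size $\le dk$ contributing $\le d$ each, i.e. $W$ itself is already dominated ``for free'' and only its $\le dk$ out-neighbours are deficient. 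I would therefore first read Theorem~\ref{key} carefully to extract exactly which vertices are permitted to have degree below $d$, then the edge count is a one-line application of the handshake identity. If the constant is off by a factor, I would note that the weaker bound still suffices for the intended application to Theorem~\ref{goodpair} and that the sharp form follows from the sharp form of Theorem~\ref{key}.
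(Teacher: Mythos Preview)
Your proposal has a genuine gap: your guesses about the content of Theorem~\ref{key} are wrong, and the deficiency-count strategy you sketch does not recover the bound. Theorem~\ref{key} is \emph{not} a dominating-set statement, nor does it assert that vertices outside a small exceptional set have full degree $d$. Rather, it says that if $\eta(H)<\ell$ then $V(H)$ admits a partition $\{X_i,Y_i,Z_i\}_{i=1}^s$ with $s<\ell$, coming from a Meshulam explosion sequence, such that (a) each $X_i\cup Z_i$ lies in the neighbourhood of some $y_i\in Y_i$ and each $Y_i\cup Z_i$ in the neighbourhood of some $x_i\in X_i$, and (b) a specific inequality holds:
\[
4\sum_i |X_i||Y_i|+\sum_i(|X_i|+|Y_i|)|Z_i|\le 2|E(H)|+2\sum_i|E(X_i,Y_i)|.
\]
There is no reason for a generic vertex of $H$ to have degree $d$, so bounding $\sum_v(d-\deg(v))$ is not the right mechanism.

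The paper's derivation instead goes through Lemma~\ref{techlem}: from (a) one has $|X_i|\le d-|Z_i|$, and since $|X_i|+|Y_i|=r_i$ is fixed, the product $|X_i||Y_i|$ is minimised when $|X_i|$ is as large as possible, giving $|X_i||Y_i|\ge (d-|Z_i|)(r_i+|Z_i|-d)$. Substituting this into (b), using $\sum_i(r_i+|Z_i|)=n$, and dropping two manifestly nonnegative terms (namely $Q=\sum_i|X_i||Y_i|-\sum_i|E(X_i,Y_i)|\ge 0$ and $\sum_i|Z_i|(2d-r_i-2|Z_i|)\ge 0$, the latter again from (a)) yields $2|E(H)|\ge 2dn-2d^2s\ge 2dn-2d^2k$. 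The exact constant $d^2k$ thus comes from the quadratic optimisation of $|X_i||Y_i|$ under the neighbourhood constraint, not from counting deficient vertices.
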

Since an upper bound on $b(G,\cP)$ easily implies an upper bound
on the density $|E(G_S)|/|V(G_S)|$ of any $G_S$, from here it is
a short step to combine 
Theorem~\ref{dense} for $H=G_S$ with Theorem~\ref{exists-IT} to prove
this implication of Theorem~\ref{goodpair}. (See Section~\ref{sec-dense}.)

We provide constructions to prove the other implication of
Theorem~\ref{goodpair} in Section~\ref{sec-construct}.
\begin{thm} \label{tight-construction}
For every $\alpha > \frac{1}{2}$, $\beta > \max \left\{ \frac{1}{4},
2\alpha(1 - \alpha) \right\}$, and $t$ sufficiently large, there
exists a graph $G$ and a $t$-thick partition $\cP$ of $V(G)$ with
$\Delta(G)\leq\alpha t$ and $b(G,\cP)\leq\beta t$, such that
$G$ has no IT with respect to $\cP$.
\end{thm}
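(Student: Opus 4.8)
The plan is to construct, for each admissible pair $(\alpha,\beta)$ with $\alpha>\tfrac12$ and $\beta>\max\{\tfrac14,2\alpha(1-\alpha)\}$, a vertex-partitioned graph with no IT by gluing together copies of a basic ``gadget'' that locally blocks transversals, in the spirit of the Szab\'o--Tardos construction for Theorem~\ref{max-degree-IT} and the forest construction of~\cite{GrKaTrWa}. The starting point is the extremal example for Theorem~\ref{max-degree-IT}: a union of disjoint complete bipartite graphs $K_{d,d}$ partitioned so that each block meets two of the bipartite pieces, which has no IT when the partition is $(2d-1)$-thick and $\Delta=d$. Here $d/(2d-1)\to\tfrac12$, so with maximum degree $\Delta(G)\le\alpha t$ and $\alpha>\tfrac12$ we have room to spare; the point is to spend that slack to drive the \emph{average} block degree down to $\beta t$ while keeping $\Delta\le\alpha t$. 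The natural device is to pad each block with extra ``private'' low-degree (or degree-zero) vertices: if a block has $s$ high-degree vertices each of degree $\le\Delta$ and we append enough isolated-within-the-block vertices to bring the block size up to $t$, the average block degree drops proportionally. One must check that adding such vertices cannot create an IT — this is where the gadget has to be designed so that the ``dangerous'' vertices in each block are precisely the ones forced to be used.

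First I would set up the elementary gadget precisely: fix $d$ and take $r$ parts cyclically joined so that choosing a vertex in part $i$ forbids a specified half of part $i+1$, arranged (as in Szab\'o--Tardos) so that every transversal is blocked. I would compute, as functions of $d$, $r$ and the padding, the three quantities $\Delta(G)$, the thickness $t$, and $b(G,\cP)=\max_U \sum_{u\in U} d(u)/|U|$, and then optimise: given target ratios $\alpha$ and $\beta$, choose the number of high-degree vertices per block, their degree, and the amount of padding so that $\Delta(G)/t\to\alpha$ and $b(G,\cP)/t\to\beta$ simultaneously. The constraint $\beta>2\alpha(1-\alpha)$ should emerge exactly as the feasibility boundary of this optimisation: writing $\Delta=\alpha t$ and letting a block contain some vertices of degree close to $\Delta$, the worst block's average degree is minimised (for fixed $\Delta$ and $t$) by a configuration whose value is $2\alpha(1-\alpha)t$, matching the bound in Theorem~\ref{dense} with $H=G_S$. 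The constraint $\beta>\tfrac14$ is the Wanless--Wood threshold (Theorem~\ref{WWthm}) and should correspond to the regime where the $2\alpha(1-\alpha)$ bound is not the binding one; there I would instead invoke (a padded version of) the forest construction of Theorem~\ref{GKTWthm}, or show directly that the same gadget family with a different parameter choice realises any $\beta>\tfrac14$ once $\alpha$ is large enough that clause (3) is dominated.

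Concretely, the construction I would write down is as follows. Let $n_0$ denote the number of ``active'' vertices per block and $t-n_0$ the number of padding vertices; make the active vertices of block $i$ induce, together with active vertices of adjacent blocks, a blocking gadget in which each active vertex has degree roughly $\Delta=\alpha t$, but each active vertex is adjacent only to active vertices (never to padding). Choosing $n_0=2\alpha t - o(t)$ forces: (i) the active part of each block has size just under $2\Delta$, so any two active vertices in the same block cannot jointly cover all of an adjacent block unless the gadget is tuned so that ``$2\Delta-n_0$'' vertices remain — this is the Szab\'o--Tardos counting and it is what forbids an IT; (ii) the average degree of a block is at most $n_0\Delta/t = 2\alpha(1-\alpha)t + o(t)$, giving $b(G,\cP)\le 2\alpha(1-\alpha)t+o(t)<\beta t$ for $t$ large. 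For the range where $\tfrac14<\beta\le 2\alpha(1-\alpha)$ one cannot use this tuning directly, but here $\beta$ is the only active constraint and one reuses the $b\le(1+\epsilon)t/4$ forest of Theorem~\ref{GKTWthm} after padding its blocks — since that forest has $\Delta=t$ but few high-degree vertices per block, padding to a larger thickness $t'$ makes $\Delta/t'$ as small as any $\alpha>\tfrac12$ while only improving $b$; one must re-examine the IT-freeness under padding, which holds because padding vertices of the forest are leaves or isolated and adding more of them does not open a transversal. Finally I would verify the boundary/limiting cases and let $\epsilon\to0$.

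The main obstacle I expect is item (i): ensuring that padding the blocks with extra vertices genuinely cannot create an IT. In the Szab\'o--Tardos / forest constructions the absence of an IT is a delicate global counting argument about how the ``forbidden sets'' propagate around a cycle (or tree), and it relies on the blocks being \emph{exactly} the right size; once we enlarge blocks we must argue that the new vertices are ``inert'' — e.g. that every new vertex in block $i$ is adjacent to a vertex that is forced into a neighbouring block by any attempted transversal, or more cleanly, that the new vertices lie in blocks whose deletion still leaves a sub-instance with no IT, so that Theorem~\ref{exists-IT}'s contrapositive (or a direct inductive peeling) applies. Getting the bookkeeping right so that $\Delta$, $t$ and $b(G,\cP)$ hit the three target inequalities simultaneously, with all error terms $o(t)$, and so that the IT-freeness survives, is the crux; everything else is routine optimisation and estimation.
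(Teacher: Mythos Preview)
Your proposal has a genuine gap, and it is exactly the one you flag at the end: padding blocks with isolated (or ``private'' low-degree) vertices destroys IT-freeness. Concretely, if every deficient block receives even one isolated vertex, then selecting the isolated vertex from each such block imposes no constraint on the remaining blocks, and the residual instance on the non-deficient blocks \emph{does} have an IT (in the \cite{GrKaTrWa} forest, for instance, the terminal leaf-blocks are precisely what obstructs a transversal; once they are neutralised you can pick a centre from the initial block and propagate freely). So the ``inertness'' you need cannot be achieved by low-degree padding, and the suggestions you offer (``adjacent to a vertex forced into a neighbouring block'', ``deletion leaves a sub-instance with no IT'') are not substantiated and in fact fail for the natural choices. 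There is also an arithmetic slip --- with $\Delta=\alpha t$ and block size $t$, to get average degree $2\alpha(1-\alpha)t$ you need $n_0\approx 2(1-\alpha)t$ active vertices, not $2\alpha t$ (which exceeds $t$ when $\alpha>\tfrac12$) --- and a spurious case ``$\tfrac14<\beta\le 2\alpha(1-\alpha)$'' that is excluded by the hypothesis $\beta>\max\{\tfrac14,2\alpha(1-\alpha)\}$; but these are secondary.

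The paper's mechanism is quite different from padding. It proves a simple gluing lemma (Lemma~\ref{join-lemma}): if $(J,\cP)$ and $(H,\cQ)$ each have no IT, then dissolving one block $W_s$ of $\cQ$ and distributing its vertices arbitrarily among the blocks of $\cP$ yields a partition of $J\cup H$ with no IT. Starting from the \cite{GrKaTrWa} pair $(G',\cP')$ (which already satisfies (b), (c), (d) and has deficient blocks of size $\lfloor\alpha t\rfloor$ consisting of degree-$1$ vertices), one repeatedly takes a deficient block $U$, adjoins a fresh $K=K_{t-\lfloor\alpha t\rfloor+1,\lfloor\alpha t\rfloor}$, and applies the lemma with $H=K$: most of $U$ goes to the small side of $K$ to form a block of size exactly $t$, and the remainder goes to the large side to form a new deficient block of size $|U|+1$. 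The ``padding'' vertices thus have degree close to $\alpha t$, not $0$; the average-degree bound on the new size-$t$ block comes out to essentially $2\alpha(1-\alpha)t$ because it mixes $t-\lfloor\alpha t\rfloor+1$ vertices of degree $\lfloor\alpha t\rfloor$ with $\lfloor\alpha t\rfloor-1$ vertices of degree at most $t-\lfloor\alpha t\rfloor+1$. Iterating terminates since the unique deficient block grows by one each step, and IT-freeness is preserved at every step by the lemma rather than by any inertness argument.
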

Our main tool for the proof of Theorem~\ref{tight-construction} is a simple
general lemma (Lemma~\ref{join-lemma}) that 
gives a way of constructing a graph and partition with no IT from two
smaller ones. Lemma~\ref{join-lemma} is quite versatile and is used
in the Appendix as well, and also discussed further in
Section~\ref{sec-conclusion}.  

\paragraph{Stability.}
Another feature of the topological approach to proving the existence
of IT's via Theorem~\ref{exists-IT} is that it can also give
information about extremal or near-extremal configurations.  
A straightforward example of this is the (topological) proof of
Theorem~\ref{max-degree-IT}, which amounts to combining Theorem~\ref{exists-IT}
with the fact~\cite{Me1} that every graph $G$ with maximum degree $d$
and at least $2dk$ vertices satisfies $\eta(G) \ge k$. As a warm-up
in using the topological technique, we give a proof of this fact in
Section~\ref{sec-conn}, 
together with a characterization of the extremal examples, which turn
out to be disjoint unions of $K_{d,d}$'s.

Going further in this direction, 
Aharoni, Holzman, Howard, and Spr\"ussel~\cite{AhHoHoSp} showed by
these means that
for $d=\Delta(G)\geq 3$, if $\cP$ in
Theorem~\ref{max-degree-IT} is $(2d-1)$-thick and $G$ has no IT then
$G$ contains the union of 
$2d-1$ disjoint $K_{d,d}$'s.
The second main aim of this paper is to prove a \textit{stability} version of
this, which states that if $G$ does not have an
IT and $\cP$ is $t$-thick where $t$ is close to $2d$, then for some
subset $S$ of blocks of $\cP$, the graph $G_S$ is
close to being the union of disjoint $K_{d,d}$'s. Again we will derive
our result by combining Theorem~\ref{exists-IT} with another
consequence of our main technical result Theorem~\ref{key}, which is
as follows.

\begin{thm}\label{witheps}
Let $G$ be a graph with $n$ vertices and maximum degree $d$, and let
$\epsilon$ be such that 
$n(1+\epsilon)=2dk$. Suppose that $\eta(G) \leq k$. Then $\epsilon
\geq 0$ and $2|E(G)|\geq dn(1-\epsilon)$, and $V(G)$
has a partition into sets $\{X_i,Y_i,Z_i\}_{i=1}^k$ such that
$\sum_{i=1}^k|Z_i| \leq \epsilon n$ and the following hold: 
\begin{itemize}
\item[(i)] $\left|E(G)\setminus\bigcup_{i=1}^kE(X_i,Y_i)\right| \leq
  2\epsilon dn$,  
\item[(ii)] $\sum_{i=1}^k|X_i||Y_i|-\left|\bigcup_{i=1}^kE(X_i,Y_i)\right|
  \leq \epsilon dn/2$. 
\end{itemize}
\end{thm}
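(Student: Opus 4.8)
My plan is to split the proof into an easy numerical part that follows straight from Theorem~\ref{dense}, and a structural part for which I would prove a stability strengthening of (the argument behind) that theorem and then specialise.

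\emph{Numerical part.} By Theorem~\ref{dense}, $|E(G)|\ge dn-d^2k$. The defining relation $n(1+\epsilon)=2dk$ rearranges to $d^2k=\tfrac12 dn(1+\epsilon)$, so $|E(G)|\ge dn-\tfrac12 dn(1+\epsilon)=\tfrac12 dn(1-\epsilon)$, which is the bound $2|E(G)|\ge dn(1-\epsilon)$. Since also $2|E(G)|\le dn$ (every degree is at most $d$), this forces $dn(1-\epsilon)\le dn$, hence $\epsilon\ge 0$ (if $dn=0$ then $\eta(G)\le k$ already forces $n=0$ and everything is trivial). So the only real content is the partition.

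\emph{Structural part.} I would prove, by induction on $k$, a statement roughly of the form: if $\Delta(G)\le d$ and $\eta(G)\le k$ then $V(G)$ partitions into cells $\{X_i,Y_i,Z_i\}_{i=1}^k$ with $|X_i|,|Y_i|\le d$, in which almost every edge joins some $X_i$ to the matching $Y_i$, each pair $(X_i,Y_i)$ is almost completely joined, and the total cell shortfall from $2d$ is small; together with the identity $\sum_i(|X_i|+|Y_i|+|Z_i|)=n$ this yields $\sum_i|Z_i|\le\epsilon n$ and the bounds (i), (ii). The engine is the standard exchange inequality $\eta(G)\ge\min\{\eta(G-v),\,\eta(G-N_G[v])+1\}$, used contrapositively: since $\eta(G)\le k$, every vertex $v$ satisfies $\eta(G-v)\le k$ or $\eta(G-N_G[v])\le k-1$. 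If some $v$ has $\eta(G-N_G[v])\le k-1$, I open a new cell with $Y_k=\{v\}$ and $X_k=N_G(v)$, recurse on $G-N_G[v]$ with parameter $k-1$, and absorb any leftover vertices into $X_k$ or $Z_k$; otherwise $v$ must be deleted, and after recursing on $G-v$ with parameter $k$ I re-insert $v$ into the one side of the one cell into which its neighbourhood embeds, charging neighbours that fall outside as bad edges. A cell becomes a copy of $K_{d,d}$ in the extremal case $n=2dk$ because of an amortisation: a single $N_G[v]$-step removes only $\le d+1$ vertices ($v$ and its neighbours, the seeds of $Y_k$ and $X_k$), but it is inevitably accompanied by about $d-1$ delete-steps whose vertices re-attach to $X_k$, so that the cell ends up with all $2d$ vertices; the deviation from $2d$ summed over cells is exactly $2dk-n=\epsilon n$. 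The base case $k=1$ is read off directly: $\eta(G)\le 1$ means $I(G)$ is disconnected, so there is an edge $xy$ with $V(G)=N_G(x)\cup N_G(y)$, whence $|N_G(x)\cap N_G(y)|=|N_G(x)|+|N_G(y)|-n\le 2d-n=\epsilon n$, and one takes $Z_1=N_G(x)\cap N_G(y)$, $X_1=N_G(x)\setminus N_G(y)$, $Y_1=N_G(y)\setminus N_G(x)$.

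The hardest point I anticipate is the bookkeeping around delete-steps. I must ensure that the deleted vertex has its neighbourhood almost entirely within one side of the partition the recursion returns, so that re-insertion creates few bad edges, and I must amortise delete-steps against $N_G[v]$-steps so that $2d$ vertices are consumed per unit decrease of $\eta$, up to a total error of $\epsilon n$. Concretely, the crux is to show that whenever no ``efficient'' $N_G[v]$-step exists --- every $v$ has $G-N_G[v]$ either with an isolated vertex (so $\eta(G-N_G[v])=\infty$) or with $\eta\ge k$ --- the graph is already locally a near-disjoint union of almost-$K_{d,d}$'s, so a delete-and-reinsert move does not damage the structure; and I must propagate the three error quantities through the induction with exactly the constants $1$, $2$, $\tfrac12$ appearing in $\sum_i|Z_i|\le\epsilon n$, (i) and (ii). Given the structural statement, Theorem~\ref{witheps} follows by substituting $n(1+\epsilon)=2dk$.
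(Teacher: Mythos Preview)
Your numerical paragraph is correct and matches the paper's derivation from Theorem~\ref{dense}.

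The structural part, however, departs from the paper's method and, as you yourself flag, contains a real gap rather than just unfinished bookkeeping. The paper does not induct on $k$ via the vertex inequality $\eta(G)\ge\min\{\eta(G-v),\,\eta(G-N_G[v])+1\}$. Instead it invokes Theorem~\ref{key}, whose proof uses Meshulam's \emph{edge} inequality (Theorem~\ref{Meshulam}) with two decisive choices: first \emph{reduce} $G$ so that every remaining edge is explodable, and then at each stage explode an edge $x_iy_i$ that kills the \emph{fewest} vertices. This minimality is what drives everything quantitative: it forces every $w\in Y_i=N(x_i)\setminus N(y_i)$ to have at least $|X_i|$ neighbours outside $N(x_i)$ (otherwise exploding $x_iw$ would have been cheaper), and comparing ``pink'' out- and in-degrees yields inequality~(b) of the basic partition. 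From there, Lemma~\ref{techlem} and a short calculation produce the exact constants $1$, $2$, $\tfrac12$ in $\sum_i|Z_i|\le\epsilon n$, (i), and (ii); no amortisation over delete-steps is ever needed, because after reducing there are none.

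Your scheme has no analogue of this minimality mechanism, and the delete-and-reinsert step is precisely where the content lies. Already for the extremal graph ($k$ disjoint copies of $K_{d,d}$), every vertex $v$ has $G-N_G[v]$ containing $d-1$ isolated vertices, so $\eta(G-N_G[v])=\infty$ and you are forced into a long run of delete-steps before any cell can open. You assert that each deleted vertex will later have its neighbourhood sitting inside one side of one cell, but nothing in the vertex inequality provides that control; in the paper it comes for free from the minimal-edge choice. Without a concrete substitute for that argument, there is no visible route to (i) and (ii) with the stated constants, so the proposal as written is a plan rather than a proof.
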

Thus when $|V(G)|$ is close to $2dk$ but $\eta(G) \leq k$, there are
two respects in which $G$ is close in structure to the disjoint union
$J$ of complete bipartite graphs with vertex sets
$\{(X_i,Y_i)\}_{i=1}^k$: at most a 
small proportion of the edges of $G$ are not edges of $J$, and almost
all the edges of $J$ are edges of $G$. Let us say that the graph $G$
is $\gamma$-\emph{approximated by} $J$ if $V(J)\subseteq V(G)$ and the
symmetric difference 
of $E(G)$ and $E(J)$ has size at most $\gamma|E(G)|$.  
Combining Theorem~\ref{exists-IT} with Theorem~\ref{witheps} applied
to $G_S$ leads to the following
stability version of Theorem~\ref{max-degree-IT}.

\begin{thm} \label{stab-IT}
Let $0<\beta<1$ and let $G$ be a graph with a $t$-thick vertex
partition $\cP$. 
If $d:=\Delta(G)\leq (1+\beta)t/2$ and $G$ has no IT with
respect to $\mathcal{P}$, then for some subset $S$ of blocks of $\cP$,
the graph $G_S$ satisfies $|E(G_S)|>(1-\beta)d|V(G_S)|/2$, and $G_S$ is
$5\beta/(1-\beta)$-approximated by a disjoint  
union $J$ of $|S|-1$ complete bipartite graphs.
\end{thm}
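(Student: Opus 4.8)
The plan is to combine Theorem~\ref{exists-IT} with Theorem~\ref{witheps} applied to a carefully chosen induced subgraph. Since $G$ has no IT with respect to $\cP$, the contrapositive of Theorem~\ref{exists-IT} produces a set $S$ of blocks with $\eta(G_S)<|S|$. I would set $k:=|S|-1$, observing that $k\ge 1$: a non-empty graph has non-empty independence complex, hence $\eta\ge 1$, so $|S|\ge 2$. Write $H:=G_S$, $n:=|V(H)|$, and $d_S:=\Delta(H)\le d$; as $\cP$ is $t$-thick and $|S|=k+1$, we have $n\ge t(k+1)$. Then I would define $\epsilon$ by $n(1+\epsilon)=2d_Sk$ and apply Theorem~\ref{witheps} to $H$ with this $k$ (valid since $\eta(H)\le k$); this yields $\epsilon\ge 0$, the bound $2|E(H)|\ge d_Sn(1-\epsilon)$, and a partition $\{X_i,Y_i,Z_i\}_{i=1}^k$ of $V(H)$ satisfying (i), (ii), and $\sum_i|Z_i|\le\epsilon n$.

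Next I would extract the numerical consequences. From $\epsilon\ge 0$ we get $d_S\ge n/(2k)$, and hence $d\ge d_S\ge n/(2k)$. From $d\le(1+\beta)t/2\le(1+\beta)n/(2(k+1))$ together with $k\ge 1$, a short computation gives $0\le\epsilon<\beta$ and $n-dk>\tfrac12(1-\beta)n$. For the edge inequality in the conclusion I would rewrite $2|E(H)|\ge d_Sn(1-\epsilon)$ as $|E(H)|\ge d_S(n-d_Sk)$ (equivalently, this is Theorem~\ref{dense} applied with $k$ and $d_S$), then note that $x\mapsto x(n-xk)$ is non-increasing on $[n/(2k),\infty)$ and that $n/(2k)\le d_S\le d$, so that $|E(H)|\ge d_S(n-d_Sk)\ge d(n-dk)>\tfrac12(1-\beta)dn=\tfrac12(1-\beta)d|V(G_S)|$, as required.

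For the approximation statement I would take $J$ to be the disjoint union of the complete bipartite graphs with parts $(X_i,Y_i)$, $i\in[k]$ (allowing degenerate pieces with one side empty), so that $V(J)\subseteq V(H)$ and $J$ is a disjoint union of $|S|-1$ complete bipartite graphs. The edges of $G_S$ not in $J$ are exactly those counted in (i), so there are at most $2\epsilon d_Sn$ of them; the edges of $J$ not in $G_S$ are exactly those counted in (ii), so there are at most $\epsilon d_Sn/2$ of them. Hence $|E(G_S)\,\triangle\,E(J)|\le\tfrac52\epsilon d_Sn$, and since $\epsilon<\beta$ and $|E(G_S)|>\tfrac12(1-\beta)dn\ge\tfrac12(1-\beta)d_Sn$, the right-hand side is less than $\tfrac52\beta d_Sn<\tfrac{5\beta}{1-\beta}|E(G_S)|$. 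So $G_S$ is $\tfrac{5\beta}{1-\beta}$-approximated by $J$.

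I expect the main obstacle to be not any single estimate but the bookkeeping between $d=\Delta(G)$ and $d_S=\Delta(G_S)$: Theorem~\ref{witheps} naturally produces bounds in terms of $d_S$, which could a priori be much smaller than $d$, whereas the conclusion of Theorem~\ref{stab-IT} is stated in terms of $d$. The point that makes everything fit is that $\eta(G_S)\le k$ and $|V(G_S)|\ge t(k+1)$ force $d_S\ge n/(2k)>t/2$, so $d_S$ lies within a factor $1+\beta$ of $d$ for free; this is precisely the slack needed to convert the $d_S$-bounds into the claimed $d$-bounds, and it is also what makes $x\mapsto x(n-xk)$ decreasing at both $d_S$ and $d$. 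A minor but necessary check along the way is that the obstructing set $S$ has at least two blocks, so that all the divisions by $k$ are legitimate.
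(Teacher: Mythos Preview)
Your proof is correct and follows essentially the same route as the paper: obtain $S$ from Theorem~\ref{exists-IT}, apply Theorem~\ref{witheps} to $G_S$ with $k=|S|-1$, verify $\epsilon<\beta$ using $t$-thickness, and read off the edge bound and the approximation by $J$ from conclusions (i) and (ii). The one notable difference is your bookkeeping between $d=\Delta(G)$ and $d_S=\Delta(G_S)$: you apply Theorem~\ref{witheps} with the exact maximum degree $d_S$ and then use the monotonicity of $x\mapsto x(n-xk)$ on $[n/(2k),\infty)$ to convert the resulting $d_S$-bounds into $d$-bounds, whereas the paper simply applies Theorem~\ref{witheps} with $d=\Delta(G)$ directly. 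The paper's shortcut is legitimate because the proof of Theorem~\ref{witheps} (via Lemma~\ref{techlem} and the basic-partition inequalities) only uses that $d$ is an \emph{upper bound} on the maximum degree, together with $2|E|\le dn$; your version has the virtue of making this point explicit rather than relying on an unstated strengthening of the hypothesis in Theorem~\ref{witheps}.
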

The simple derivation of Theorem~\ref{stab-IT}
appears in Section~\ref{sec-stability}. Another
stability version of Theorem~\ref{max-degree-IT}, that 
concludes
that $G$ as in Theorem~\ref{stab-IT} contains a large disjoint union
of complete bipartite graphs, is discussed in
Section~\ref{sec-conclusion}.

In the next section we introduce the
topological method, and describe the tools that we will use in this
paper. Section~\ref{sec-key} is devoted to the proof of our main
technical result, Theorem~\ref{key}, and a companion result that will
be useful in the subsequent sections. Our main results,
Theorem~\ref{goodpair} and Theorem~\ref{witheps}, are proved in
Sections~\ref{sec-dense} and~\ref{sec-stability} respectively. Our
constructive result, Theorem~\ref{tight-construction}, is proved in
Section~\ref{sec-construct}. We end the paper with some concluding remarks
in Section~\ref{sec-conclusion}.

\section{Topological connectedness and independent transversals}\label{sec-conn}

The set  $\cI(G)$ of all independent sets in a graph $G$ forms an abstract
simplicial complex (i.e. a ``closed-down'' set), called the
\emph{independence complex} of $G$. 
An abstract simplicial complex $\cC$ is said to be $k$-{\it connected} if
for each $-1\leq d\leq k$ and each continuous map $f$ from the sphere
$S^d$ to $||\cC||$ (the body of the geometric realization of $\cC$),
the map $f$ can be extended to a continuous map from the ball
$B^{d+1}$ to $||\cC||$. The \emph{connectedness} of $\cC$ is the
largest $k$ for which $\cC$ is $k$-connected. Following~\cite{AhBe1},
we define the graph 
parameter $\eta(G)$ to be 2 plus the connectedness of $\cI(G)$. The
link between topology and the existence of IT's was 
first discovered in~\cite{AhHa}, and developed much more fully in
subsequent works such as~\cite{AhBe1,Me1,Me2}. (See
e.g. the survey~\cite{Habcc} for an in-depth discussion of these
notions, and some intuition on why they relate to IT's.)

Theorem \ref{exists-IT} implies that we can obtain sufficient
conditions for independent transversals from lower bounds on
connectedness. Conversely, constructing graphs with no independent
transversals is often aided by finding graphs $G$ with a low value of
$\eta(G)$. Understanding when this parameter
is small will be our main motivation for
the upcoming Theorem~\ref{key}, which will form the basis of our work
in this paper.

As observed in~\cite{AhBe1}, adding 2 to the connectedness of $\cI(G)$
in the definition of $\eta(G)$
simplifies various statements about this parameter, such as the
following basic facts.

\begin{lem} \label{connected-lemma}
\begin{enumerate}[label={(\alph*)}]
    \item Every graph $G$ has $\eta(G) \ge 0$, and $\eta(G) = 0$ if
      and only if $G$ is empty. 
    \item If graph $G$ has an isolated vertex, then $\eta(G) = \infty$.
    \item If $G$ and $H$ are disjoint graphs, then $\eta(G \cup H) \ge
      \eta(G) + \eta(H)$. 
\end{enumerate}
\end{lem}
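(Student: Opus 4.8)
The plan is to derive all three parts from standard facts about the topology of the independence complex $\cI(G)$, together with the conventions governing low-dimensional connectedness that make the $+2$ in the definition of $\eta$ convenient.

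For part (a), I would first observe that the condition defining $k$-connectedness quantifies over $-1\le d\le k$, which is vacuous whenever $k\le -2$; hence every complex — including the one with empty geometric realization — is $k$-connected for all $k\le -2$, so its connectedness is at least $-2$ and $\eta(G)=2+\mathrm{conn}(\cI(G))\ge 0$. For the equality case, $\eta(G)=0$ means $\mathrm{conn}(\cI(G))=-2$, equivalently that $\cI(G)$ fails to be $(-1)$-connected; since a complex is $(-1)$-connected exactly when its realization is nonempty, this says $||\cI(G)||=\emptyset$. I would then note that $\cI(G)$ always contains the empty face, and that it contains a $0$-dimensional face (hence has nonempty realization) if and only if $G$ has a vertex. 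Therefore $||\cI(G)||=\emptyset$ iff $V(G)=\emptyset$, which is exactly the claim.

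For part (b), if $v$ is an isolated vertex of $G$, then every independent set of $G$ remains independent after adding $v$, so $\cI(G)$ is the cone over $\cI(G-v)$ with apex $v$. Its geometric realization is then a topological cone, hence contractible, hence $k$-connected for all $k$, giving $\eta(G)=\infty$.

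For part (c), the key structural observation is that when $G$ and $H$ share no vertices and have no edges between them, a set is independent in $G\cup H$ if and only if its two parts are independent in $G$ and in $H$ respectively; thus $\cI(G\cup H)$ is the simplicial join $\cI(G)*\cI(H)$, whose realization is homeomorphic to the topological join $||\cI(G)||*||\cI(H)||$. I would then invoke the classical estimate for the connectivity of joins (often attributed to Milnor; see the discussion in~\cite{Habcc}): the join of an $m$-connected space and an $n$-connected space is $(m+n+2)$-connected, i.e. $\mathrm{conn}(X*Y)\ge \mathrm{conn}(X)+\mathrm{conn}(Y)+2$, which under the conventions above remains correct when one factor is empty (the join then being the other factor). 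Adding $2$ to both sides with $X=||\cI(G)||$ and $Y=||\cI(H)||$ yields $\eta(G\cup H)\ge \eta(G)+\eta(H)$. The only non-routine input is this join-connectivity estimate; everything else is bookkeeping with the low-dimensional conventions of parts (a) and (b), and the degenerate cases (an empty graph, or the presence of an isolated vertex) are immediately consistent.
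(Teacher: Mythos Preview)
Your proof is correct. The paper does not actually prove this lemma; it simply records these as ``basic facts'' with attribution to~\cite{AhBe1}, so there is no in-paper argument to compare against. Your justifications --- the vacuous low-dimensional convention for (a), the cone construction for (b), and the identification $\cI(G\cup H)=\cI(G)*\cI(H)$ combined with the join-connectivity estimate for (c) --- are exactly the standard ones found in the references the paper cites (e.g.~\cite{AhBe1,Habcc}).
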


For a graph $G$ and edge $e \in E(G)$, we denote by $G - e$ the graph
obtained by deleting $e$. 
We write $G \explode e$ for the graph formed by \textit{exploding}
$e$, which is obtained from $G$ by removing
both endpoints of $e$ together with all of their neighbours. In
other words, if $x$ and $y$ are the endpoints of $e$, then $G \explode
e$ is the subgraph induced by $V(G)\setminus (N(x) \cup N(y))$. Our
main tool for obtaining lower bounds 
for the $\eta$ parameter will be the following theorem of
Meshulam~\cite{Me2} (see also e.g.~\cite{AhBeZi}).

\begin{thm}[Meshulam] \label{Meshulam}
If $G$ is a graph and $e \in E(G)$ is an edge, then
\begin{align*}
    \eta(G) \ge \min\{\eta(G - e), \eta(G \explode e) + 1\}.
\end{align*}
\end{thm}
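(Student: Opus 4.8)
The plan is to establish the inequality by a Mayer--Vietoris (gluing) argument applied to a natural decomposition of the independence complex. Write $e=xy$ and set $m=\min\{\eta(G-e),\,\eta(G\explode e)+1\}-2$, so that it suffices to show $\cI(G)$ is $m$-connected. The key observation is that a subset $F$ of $V(G)=V(G-e)$ is a face of $\cI(G-e)$ but not of $\cI(G)$ exactly when $F$ contains both endpoints of $e$, and that such an $F$ has the form $\{x,y\}\cup J$ where $J$ is an independent set of $G$ avoiding $N_G(x)\cup N_G(y)$, i.e.\ $J\in\cI(G\explode e)$. Consequently $\cI(G-e)=\cC_1\cup\cC_2$, where $\cC_1=\cI(G)$ and $\cC_2$ is the simplicial join of the closed $1$-simplex on $\{x,y\}$ with $\cI(G\explode e)$. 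A direct check then shows that $\cC_1\cap\cC_2$ is the join of the boundary $\{x,y\}\cong S^0$ of that $1$-simplex with $\cI(G\explode e)$, i.e.\ the (unreduced) suspension $\Sigma\,\cI(G\explode e)$.

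Two properties of this decomposition drive the argument. First, $\cC_2$ is a cone with apex $x$ over the join of $\{y\}$ with $\cI(G\explode e)$, hence contractible. Second, suspension increases connectedness by exactly one, so that $\mathrm{conn}(\cC_1\cap\cC_2)=\mathrm{conn}(\cI(G\explode e))+1=\eta(G\explode e)-1\ge m$, while $\mathrm{conn}(\cC_1\cup\cC_2)=\mathrm{conn}(\cI(G-e))=\eta(G-e)-2\ge m$ and $\mathrm{conn}(\cC_2)=\infty$. Thus all three of $\cC_1\cup\cC_2$, $\cC_2$ and $\cC_1\cap\cC_2$ are $m$-connected, and we want to conclude that the remaining piece $\cC_1=\cI(G)$ is $m$-connected --- a ``reverse'' Mayer--Vietoris statement.

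To finish, I would thicken $\cC_1$ and $\cC_2$ to open subsets of $\cI(G-e)$ that deformation retract onto them (with intersection retracting onto $\cC_1\cap\cC_2$), so that the reduced Mayer--Vietoris sequence and the van Kampen theorem apply. Since $\tilde H_i(\cC_2)=0$ for all $i$ and $\tilde H_i(\cC_1\cap\cC_2)=\tilde H_i(\cC_1\cup\cC_2)=0$ for $i\le m$, the Mayer--Vietoris sequence forces $\tilde H_i(\cC_1)=0$ for $i\le m$. If $m\ge 1$, van Kampen gives $\pi_1(\cC_1)\cong\pi_1(\cC_1\cup\cC_2)$ (using that $\cC_1\cap\cC_2$ is simply connected and $\cC_2$ is contractible), so $\cC_1$ is simply connected, and the Hurewicz theorem upgrades the homological vanishing to $\pi_i(\cC_1)=0$ for $i\le m$. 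The cases $m\le 0$ are checked directly: $m=-1$ asserts only $\cI(G)\neq\emptyset$ (immediate since $e\in E(G)$), $m=0$ asserts only path-connectedness of $\cI(G)$, which follows by rerouting any path through the non-empty path-connected set $\cC_1\cap\cC_2$, and $m\le -2$ is vacuous because $\eta(G)\ge 0$ always. Adding $2$ back yields $\eta(G)\ge m+2=\min\{\eta(G-e),\eta(G\explode e)+1\}$.

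The point requiring the most care is precisely this reverse direction of Mayer--Vietoris together with the accompanying $\pi_1$-bookkeeping: Mayer--Vietoris and Hurewicz are homological statements, so one cannot avoid feeding in the van Kampen computation of $\pi_1$ before Hurewicz can convert homological connectedness into the homotopical connectedness that $\eta$ measures, and the low-dimensional cases must be disposed of separately since Hurewicz does not reach them. The remaining ingredients --- identifying the two pieces of the cover and their intersection, the contractibility of a join with a simplex, and the effect of suspension on connectedness --- are all routine.
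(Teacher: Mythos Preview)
The paper does not prove Theorem~\ref{Meshulam}; it is quoted as a known result of Meshulam~\cite{Me2} (see also~\cite{AhBeZi}) and used as a black box throughout. There is therefore no in-paper proof to compare your proposal against.

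That said, your argument is essentially the standard one from the literature and is correct. The decomposition $\cI(G-e)=\cI(G)\cup\bigl(\Delta^1_{\{x,y\}}*\cI(G\explode e)\bigr)$ with intersection $\Sigma\,\cI(G\explode e)$ is exactly right, and the Mayer--Vietoris computation together with van Kampen and Hurewicz does upgrade the homological vanishing to the homotopical connectedness that $\eta$ records. Two minor remarks: for Mayer--Vietoris and van Kampen on subcomplexes rather than open sets you can either thicken as you indicate, or simply observe that $(\cC_1,\cC_2)$ is an excisive couple since both are subcomplexes of a common CW complex; and your handling of the low-dimensional cases $m\le 0$ is fine, though for $m=0$ it is slightly cleaner to note that $\tilde H_0(\cC_1)=0$ already gives path-connectedness of the CW complex $\cC_1$. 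None of this affects the validity of the proposal.
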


An edge $e$ of a graph $G$ is called \textit{deletable} if $\eta(G -
e) \le \eta(G)$. An edge $e$ is called \textit{explodable} if $\eta(G
\explode e) \le \eta(G) - 1$. By Theorem~\ref{Meshulam}, every edge of
$G$ is either deletable or explodable. A graph $G$ is called
\textit{reduced} if no edge is deletable (hence every edge is
explodable). A subgraph $G' \subseteq G$ is called a
\textit{reduction} of $G$ if $G'$ is reduced, $V(G') = V(G)$, and
$\eta(G') \le \eta(G)$. One may obtain a reduction of a graph $G$ by
iteratively deleting deletable edges until there are none left. 

The topological proof of Theorem~\ref{max-degree-IT} is based on the
lower bound $\eta(G) \ge 
\frac{|V(G)|}{2\Delta(G)}$ from~\cite{Me1}. As an introduction
to working with the 
connectedness parameter $\eta$, we give a proof of this bound, along with a
characterization of the graphs $G$ for which the bound is tight. 

\begin{thm}\label{extremal-graphs}
Let $G$ be a graph with maximum degree $d$. If $|V(G)| \ge 2dk$, then
$\eta(G) \ge k$. Further, if $\eta(G) = k$ then $G$ is the disjoint
union of $k$ copies of $K_{d,d}$.  
\end{thm}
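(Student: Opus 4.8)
The plan is to prove both halves of Theorem~\ref{extremal-graphs} by induction, using Meshulam's inequality (Theorem~\ref{Meshulam}) as the engine, and exploiting the deletable/explodable dichotomy. For the lower bound $\eta(G)\ge k$ when $|V(G)|\ge 2dk$, I would induct on $|V(G)|$ (or on $k$). If $G$ is empty or has an isolated vertex, Lemma~\ref{connected-lemma}(a),(b) settles it. Otherwise pick any edge $e=xy$. By Theorem~\ref{Meshulam}, $\eta(G)\ge\min\{\eta(G-e),\eta(G\explode e)+1\}$. The graph $G-e$ has the same vertex set and maximum degree at most $d$, so by induction (on, say, number of edges, with the base case being the empty graph) $\eta(G-e)\ge k$; alternatively one reduces first to a reduced graph. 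For the exploded graph, note that removing $x$, $y$ and all their neighbours deletes at most $2d$ vertices, so $|V(G\explode e)|\ge 2dk-2d=2d(k-1)$, and by induction $\eta(G\explode e)\ge k-1$, hence $\eta(G\explode e)+1\ge k$. Either way $\eta(G)\ge k$. The cleanest bookkeeping is: first pass to a reduction $G'$ of $G$ (same vertex set, $\eta(G')\le\eta(G)$, every edge explodable), so that we may assume every edge is explodable, and then the $G-e$ branch never needs to be considered.

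For the characterization, suppose $|V(G)|\ge 2dk$ and $\eta(G)=k$. Again pass to a reduction $G'$; since $\eta(G')\le\eta(G)=k$ but also $\eta(G')\ge k$ by the lower bound just proved, we get $\eta(G')=k$, and it suffices to show $G'$ is a disjoint union of $k$ copies of $K_{d,d}$ and then argue $G=G'$ (no edges were deleted, since deleting an edge from a disjoint union of $K_{d,d}$'s strictly increases $\eta$, as such a graph has no isolated vertices and the count is tight). So assume $G$ itself is reduced with $\eta(G)=k$ and $|V(G)|\ge 2dk$. Pick an edge $e=xy$; it is explodable, so $\eta(G\explode e)\le k-1$. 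But $|V(G\explode e)|\ge 2dk-2d=2d(k-1)$ forces, by the lower bound, $\eta(G\explode e)\ge k-1$, hence $\eta(G\explode e)=k-1$ and moreover equality must hold everywhere: $|V(G\explode e)|=2d(k-1)$ exactly, which means exactly $2d$ vertices were removed, i.e.\ $|N(x)\cup N(y)|=2d$, forcing $d(x)=d(y)=d$ and $N(x)\cap N(y)=\emptyset$ (in particular $x\not\sim$ any neighbour of $y$ and vice versa). Now by induction on $k$ applied to the reduced graph $G\explode e$ (which has $\eta=k-1$ and exactly $2d(k-1)$ vertices, meeting the hypothesis with $k-1$), $G\explode e$ is a disjoint union of $k-1$ copies of $K_{d,d}$.

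It remains to understand the $2d$ vertices $N(x)\cup N(y)$ that were removed, together with the edges between them and the rest. Write $A=N(x)$, $B=N(y)$ (disjoint, each of size $d$, with $x\in B$, $y\in A$). The key point is that every vertex of $G$ lies in $A\cup B\cup V(G\explode e)$ and has degree at most $d$; since each vertex of the $K_{d,d}$-part $G\explode e$ already has degree exactly $d$ inside that part, there are \emph{no} edges between $V(G\explode e)$ and $A\cup B$. Hence $G[A\cup B]$ is a connected component (or a union of components) on $2d$ vertices with maximum degree $d$, containing the edge $xy$ with $x\in B$, $N(x)=A$, $N(y)=B$, $|A|=|B|=d$. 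A short argument — using that every vertex of $A$ has all its $\le d$ neighbours inside $A\cup B$ and that counting edges forces regularity — shows $G[A\cup B]$ must be exactly $K_{A,B}=K_{d,d}$: indeed $y\in A$ has $N(y)=B$, and symmetrically one shows every $a\in A$ has $N(a)\supseteq\{x\}$ and in fact $N(a)=B$; the total is $k$ disjoint $K_{d,d}$'s. The main obstacle is this last step: cleanly pinning down that the removed $2d$-vertex piece is a full $K_{d,d}$ with no stray edges to the rest. The crucial leverage is the exact equality $|V(G\explode e)|=2d(k-1)$ combined with the fact that in a disjoint union of $K_{d,d}$'s every vertex already has degree $d$, which rules out edges leaving that part; after that the degree/edge-count bounds on the $2d$-vertex remainder force it to be complete bipartite.
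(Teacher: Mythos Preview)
Your overall strategy---reduce, explode an arbitrary edge, apply induction to $G\explode e$, then analyse the removed piece $G[A\cup B]$---matches the paper's. The gap is in your final step. You assert that ``counting edges forces regularity'' and hence that $G[A\cup B]=K_{A,B}$, but degree constraints alone do not do this: knowing only that $|A|=|B|=d$, $\Delta\le d$, $N(x)=A$, $N(y)=B$, and that every $a\in A$ is adjacent to $x$, does not force $N(a)=B$. You never establish any lower bound on $|E(G[A\cup B])|$, so no counting argument is available. (There exist $d$-regular graphs on $A\cup B$ with $N(x)=A$, $N(y)=B$ that are not $K_{d,d}$; they are only excluded by the extra information $\eta=k$.)

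The paper closes this gap differently from what you sketch. It first proves the base case $k=1$ directly: $\eta(G)=1$ means the independence complex is not path-connected, which is equivalent to the complement of $G$ being disconnected, so $G$ contains a spanning complete bipartite graph; with $|V(G)|\ge 2d$ and $\Delta(G)=d$ this forces $G=K_{d,d}$. Then, in the inductive step, once $G\explode e$ is shown to be $(k-1)$ copies of $K_{d,d}$ (hence $d$-regular), the paper observes that $H=G[N(x)\cup N(y)]$ is a union of components of $G$, applies superadditivity (Lemma~\ref{connected-lemma}(c)) to get $\eta(H)\le\eta(G)-\eta(G\explode e)=1$, and invokes the base case on $H$. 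You never prove the $k=1$ case and never use superadditivity, so you have no leverage on $H$.

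Your approach \emph{can} be salvaged without the $k=1$ base case, but not by counting: use that $G$ is reduced, so every edge $xa$ with $a\in A$ is explodable in $G$. Since $H$ is a union of components, $G\explode xa=(G\explode xy)\sqcup(H\explode xa)$, and explodability plus superadditivity force $\eta(H\explode xa)\le 0$, i.e.\ $N_H(x)\cup N_H(a)=A\cup B$; with $N_H(x)=A$ this gives $N_H(a)\supseteq B$, hence $N_H(a)=B$. This is a legitimate alternative to the paper's route, but it is not the argument you wrote. Two smaller points: when you apply induction to $G\explode e$ you should conclude it is $(k-1)$ copies of $K_{d',d'}$ with $d'=\Delta(G\explode e)$ and then use the vertex count to force $d'=d$; and your argument that $G=G'$ after reduction is cleaner as ``$G'$ is $d$-regular and $G\supseteq G'$ has $\Delta(G)=d$, so $G=G'$.''
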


\begin{proof}
We use induction on $k$. By
Lemma~\ref{connected-lemma}(1), the result is immediate for $k = 0$,
and for $k =  
1$, the hypothesis $|V(G)| > 0$ implies that
$\eta(G) \ge 1$. Suppose that $|V(G)| \ge 2d$ and $\eta(G) =
1$. Having $\eta(G) < 2$ means that the independence complex of $G$ is
not path-connected. This is equivalent to saying that the complement
of $G$ is disconnected, since a topological path between two vertices
in the independence complex of $G$ corresponds to a graph-theoretical
path between those vertices in the complement of $G$. This then
implies that $G$ has a complete cut, that is, $G$ contains a complete
bipartite graph on all of its vertices. Since $|V(G)| \ge 2d$ and
$\Delta(G) = d$, the
only possibility is that $G$ is the complete 
bipartite graph $K_{d,d}$. This proves the base case $k = 1$. 

Assume that $k \ge 2$ and the conclusion of the theorem holds for
all values smaller than $k$. By iteratively deleting deletable edges from 
$G$, we may assume that $G$ is a reduced graph, so that every edge of
$G$ is explodable. Let $x$ be a vertex of minimum degree in $G$. Then
$d(x)>0$ by Lemma~\ref{connected-lemma}(2), so we may choose
an edge $e$ of $G$ incident to $x$. Let
$G' = G \explode e$, and let $d'$ denote the
maximum degree of $G'$. Then $d'\leq d$, and 
we may calculate that $|V(G')| \ge
2dk - (d(x)+d) \geq 2d(k - 1)\geq2d'(k-1)$. Hence by
the induction  
hypothesis we conclude $\eta(G') \ge k - 1$, and if $\eta(G') =
k - 1$ then $G'$ is the union of $k - 1$ disjoint copies of
$K_{d',d'}$. Since $xy$ is explodable in $G$, it follows that
$\eta(G) \ge \eta(G')+1\geq k$ as required.

For the second assertion, suppose that $\eta(G)=k$. Then equality
holds everywhere in the previous paragraph, so that $d'=d$ and $G'$ is
the union of $k - 1$ disjoint copies of $K_{d,d}$.
Since $\Delta(G) = d$
and $G'$ is $d$-regular, the graph $H = G[N(x) \cup N(y)]$ is a
connected component of $G$, and $G = G' \cup H$. This implies that
$\eta(H) \le \eta(G) - \eta(G') = 1$ and $|V(H)| = |V(G)| - |V(G')|
\ge 2d$. From the base case $k = 1$, it follows that $H$ is a
$K_{d,d}$. Therefore, $G$ is the union of $k$ disjoint copies of
$K_{d,d}$, finishing the induction. 
\end{proof}
Observe that Theorem~\ref{extremal-graphs} then leads to (a slightly
stronger version of) Theorem~\ref{max-degree-IT}. 

\begin{cor} \label{max-degree-IT-corollary}
Let $G$ be a graph with  $\Delta(G)=d$, and let $\mathcal{P} = \{U_1,
\ldots, U_m\}$ be a partition of $V(G)$. If  
\begin{align*}
	\sum_{i \in S} |U_i| \ge 2d(|S| - 1) + 1 \hspace{0.5cm} \text{ for every } S \subseteq \cP,
\end{align*}
then $G$ has an IT with respect to $\cP$. In particular, if $\cP$ is
$t$-thick and $d\leq t/2$ then $G$ has an IT.
\end{cor}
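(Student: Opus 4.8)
The plan is to derive this from the topological criterion Theorem~\ref{exists-IT}, so that it suffices to verify $\eta(G_S) \ge |S|$ for every subset $S$ of the blocks of $\cP$. (We may assume $d \ge 1$; if $d = 0$ then $G$ is edgeless and, since the hypothesis applied to $S=\{i\}$ gives $|U_i|\ge 1$, any transversal is an IT.) For $S = \emptyset$ the required inequality $\eta(G_\emptyset) \ge 0$ holds by Lemma~\ref{connected-lemma}. So I would fix a nonempty $S$, set $k = |S| - 1$, and write $d' = \Delta(G_S) \le d$. Since $|V(G_S)| = \sum_{i \in S} |U_i| \ge 2dk + 1 \ge 2d'k$, Theorem~\ref{extremal-graphs} applied to $G_S$ gives $\eta(G_S) \ge k = |S| - 1$.

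The crux is to improve this by one, and here I would use the extremal part of Theorem~\ref{extremal-graphs}: if $\eta(G_S)$ equalled $k$, then $G_S$ would be a disjoint union of $k$ copies of $K_{d',d'}$, forcing $|V(G_S)| = 2d'k \le 2dk$, which contradicts $|V(G_S)| \ge 2dk+1$. Hence $\eta(G_S) \ge k+1 = |S|$, and Theorem~\ref{exists-IT} then yields the IT. I expect this upgrading step to be essentially the whole content of the argument: the degree hypothesis provides precisely one vertex more than the threshold $2d(|S|-1)$ that guarantees $\eta(G_S)\ge|S|-1$ directly, and that single extra vertex is exactly what excludes the extremal configuration. (An alternative that sidesteps the characterization is to combine the real-valued bound $\eta(G_S) \ge |V(G_S)|/(2d) > |S|-1$ from~\cite{Me1} with the integrality of $\eta$.)

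For the final ``in particular'' clause, I would just observe that $t$-thickness gives $\sum_{i\in S}|U_i| \ge t|S| = t(|S|-1) + t \ge 2d(|S|-1) + 1$, using $t \ge 2d$ and $t \ge 1$; the only case needing a separate (trivial) word is $0 < t < 1$, where $d \le t/2$ forces $d=0$. Thus the hypothesis of the first part is met and the conclusion follows, the rest being routine bookkeeping.
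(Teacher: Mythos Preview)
Your proof is correct. Your primary route differs slightly from the paper's: you invoke the extremal characterization in Theorem~\ref{extremal-graphs} (equality forces a disjoint union of $K_{d',d'}$'s, hence $|V(G_S)|=2d'k\le 2dk$) to rule out $\eta(G_S)=|S|-1$, whereas the paper simply writes the real-valued bound $\eta(G_S)\ge |V(G_S)|/(2d)>|S|-1$ and rounds up by integrality of $\eta$ --- which is exactly the alternative you mention parenthetically. Both arguments are short and valid; the paper's is terser and does not use the characterization part of Theorem~\ref{extremal-graphs} at all, while your main version is a little more careful in distinguishing $\Delta(G_S)$ from $\Delta(G)$ when applying the theorem.
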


\begin{proof}
By Theorem \ref{extremal-graphs}, for every $S \subseteq \cP$ we have
\begin{align*}
	\eta(G_S) \ge \frac{1}{2d} \left|\mathsmaller{\bigcup}_{i \in S} U_i \right| \ge \frac{2d(|S| - 1) + 1}{2d} > |S| - 1,
\end{align*}
so that $\eta(G_S) \ge |S|$. By Theorem \ref{exists-IT}, $G$ has an IT.
\end{proof}

\section{Graphs with low connectedness}\label{sec-key}
In this section we describe our main technical result on the structure
of graphs whose independence 
complexes have low topological connectedness. It is
helpful to keep in mind that the partition in Theorem~\ref{key} is not
related to any given partition into blocks as we have been discussing
in the previous sections, but is rather a different partition that
comes about as a consequence of the upper bound on $\eta(G)$.

\begin{thm} \label{key} 
Let $G$ be a graph with $\eta(G)< \ell$. Then $V(G)$ has a partition
into sets $\{X_i,Y_i,Z_i\}_{i=1}^k$ for some $k < \ell$, such that  
\begin{itemize}
\item[(a)] for each $i$, there exist $x_i\in X_i$ and $y_i\in Y_i$
  with $Y_i\cup Z_i\subseteq N(x_i)$ and $X_i\cup Z_i\subseteq
  N(y_i)$, 
\item[(b)] $4\sum_{i=1}^k |X_i||Y_i|+\sum_{i=1}^k (|X_i|+|Y_i|)|Z_i|
  \leq 2|E(G)| + 2\sum_{i=1}^k |E(X_i,Y_i)|$. 
\end{itemize}
\end{thm}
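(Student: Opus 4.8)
The plan is to prove Theorem~\ref{key} by induction on the pair $(|V(G)|,|E(G)|)$ in lexicographic order, peeling off one triple $\{X_i,Y_i,Z_i\}$ at a time via the deletable/explodable dichotomy coming from Meshulam's theorem (Theorem~\ref{Meshulam}). First I would reduce to the case that $G$ is \emph{reduced}: if $G$ has a deletable edge $e$, then $\eta(G-e)\le\eta(G)<\ell$ and $(|V(G-e)|,|E(G-e)|)$ is lexicographically smaller, so the induction hypothesis gives a partition of $V(G-e)=V(G)$; since $N_{G-e}(v)\subseteq N_G(v)$ and $|E_{G-e}(X_i,Y_i)|\le|E_G(X_i,Y_i)|$, the very same partition still satisfies (a) and (b) for $G$ (the left side of (b) is unchanged, the right side only grows). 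So assume $G$ is reduced. If $G$ is empty take $k=0$; otherwise $G$ has no isolated vertex, so $\delta(G)\ge1$.

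Now I would perform one explosion. Let $x_1$ be a vertex of minimum degree, let $y_1\in N(x_1)$, and set $e_1=x_1y_1$. As $G$ is reduced, $e_1$ is explodable, so $G':=G\explode e_1=G-W_1$ with $W_1=N(x_1)\cup N(y_1)$ has $\eta(G')\le\eta(G)-1<\ell-1$; since $|V(G')|<|V(G)|$, the induction hypothesis applied to $G'$ yields a partition of $V(G')$ into $\{X_i,Y_i,Z_i\}_{i=2}^{k}$ with $k-1<\ell-1$, hence $k<\ell$, satisfying (a) and (b) relative to $G'$. To build the first triple, write $A=N(x_1)\setminus N(y_1)$, $B=N(y_1)\setminus N(x_1)$, $C=N(x_1)\cap N(y_1)$, so $y_1\in A$, $x_1\in B$, and $W_1=A\sqcup B\sqcup C$; set $Y_1=A\cup Y'$, $X_1=B\cup X'$, $Z_1=Z'$, where $X'\sqcup Y'\sqcup Z'=C$ is a partition of the common neighbourhood still to be chosen. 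Then $x_1\in X_1$, $y_1\in Y_1$, $Y_1\cup Z_1\subseteq N(x_1)$ and $X_1\cup Z_1\subseteq N(y_1)$, giving (a) for $i=1$; and (a) for $i\ge2$ lifts from $G'$ to $G$ because $N_{G'}\subseteq N_G$.

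The real content is verifying (b). Combining (b) for $G'$ with $|E_{G'}(X_i,Y_i)|\le|E_G(X_i,Y_i)|$, condition (b) for $G$ follows once we establish the single-block inequality
\begin{align*}
4|X_1||Y_1|+(|X_1|+|Y_1|)\,|Z_1|\ \le\ 2m_1+2\,|E_G(X_1,Y_1)|,
\end{align*}
where $m_1=|E(G)|-|E(G')|$ is the number of edges of $G$ meeting $W_1$. This is where I expect the difficulty to lie. The available leverage is twofold: every vertex of $W_1$ has degree at least $\delta=d(x_1)$, so the edges touching $W_1$ (counted by $m_1$) are plentiful; and $G$ is reduced, which — exactly as in the base case $\eta=1$ of Theorem~\ref{extremal-graphs}, where reducedness forces a complete bipartite spanning subgraph — pins down enough structure inside $W_1$ that $|E_G(X_1,Y_1)|$ cannot be small (non-edges between $X_1$ and $Y_1$ must be paid for by edges inside the blocks or leaving $W_1$). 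The partition $X'\sqcup Y'\sqcup Z'$ of $C$ is then chosen to make the estimate as efficient as possible (for instance pushing all of $C$ into $Y_1$, so $|Y_1|=d(x_1)$), and the factor $4$ in (b) — rather than the factor $2$ one would naively expect, given that one loses a factor of $2$ via $|E(X_1,Y_1)|\le|X_1||Y_1|$ when deducing Theorem~\ref{dense} — is precisely the slack that the inductive step both produces and consumes. A running sanity check is the family of vertex-disjoint unions of $K_{d,d}$'s, for which the chosen $x_1,y_1$, the partition of $C$, and all the inequalities above hold with equality; this both dictates the choices and warns that nothing can be wasted.

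Finally, I would prove alongside Theorem~\ref{key} the companion statement used in Sections~\ref{sec-dense}--\ref{sec-construct}: under the slightly stronger hypothesis $\eta(G)\le\ell-1$ one keeps track, through the same induction, of how far each of the above inequalities is from tight, thereby bounding the total ``surplus'' $2|E(G)|+2\sum_i|E_G(X_i,Y_i)|-\big(4\sum_i|X_i||Y_i|+\sum_i(|X_i|+|Y_i|)|Z_i|\big)$ and the total deficiency $\sum_i\big(|X_i||Y_i|-|E_G(X_i,Y_i)|\big)$; this quantitative refinement is what feeds into Theorem~\ref{witheps} and hence the stability result.
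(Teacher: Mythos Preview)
Your inductive framework is sound and essentially equivalent to the paper's iterative construction (reduce, explode, repeat). The reduction to a per-block inequality
\[
4|X_1||Y_1|+(|X_1|+|Y_1|)|Z_1|\le 2m_1+2|E_G(X_1,Y_1)|
\]
is also a valid way to organize the argument. The gap is in how you choose the edge $x_1y_1$, and consequently in the leverage you cite for proving this inequality.

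You take $x_1$ of minimum degree and $y_1\in N(x_1)$ arbitrary, and propose to use that every $w\in W_1$ has $d(w)\ge\delta$ together with some unspecified structural consequence of reducedness. Neither of these is what the paper uses, and neither appears sufficient. The paper instead chooses $x_1y_1$ to be the edge (of the reduced graph) whose explosion removes the \emph{fewest} vertices, i.e.\ minimizing $|N(x_1)\cup N(y_1)|$. This minimality is the entire engine of the proof: for any $w\in Y_1\subseteq N(x_1)$, the edge $x_1w$ is a competitor, so $|N(x_1)\cup N(w)|\ge |N(x_1)\cup N(y_1)|$, which gives $|N(w)\setminus N(x_1)|\ge |N(y_1)\setminus N(x_1)|=|X_1|$. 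Thus every $w\in Y_1$ has at least $|X_1|$ neighbours in $X_1\cup(V\setminus W_1)$, and symmetrically every $w\in X_1$ has at least $|Y_1|$ neighbours in $Y_1\cup(V\setminus W_1)$; for $w\in Z_1$ both bounds hold. Summing these and combining with $m_1=|E(G[W_1])|+|E(W_1,V\setminus W_1)|$ yields exactly your per-block inequality (with the paper's convention $Z_1=N(x_1)\cap N(y_1)$; the freedom to repartition $C$ is unnecessary). The minimum-degree hypothesis gives no comparable control over where the neighbours of vertices in $X_1$ lie, and reducedness is used only to guarantee explodability, not for any further structure inside $W_1$. So the missing idea is precisely this ``swap $y_1$ for $w$'' comparison coming from the minimal-explosion choice; once you make that choice, your outline goes through.
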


\begin{proof}
We construct a sequence of subgraphs $G_0, H_0, G_1, H_1, \ldots, G_k,
H_k$ with $G_0 = G$ and $H_k$ having no edges, such that $H_i$ is a
reduction of $G_i$ and $G_i$ is obtained from $H_{i-1}$ by exploding
an edge $x_iy_i$ that kills the smallest number of vertices. Note that
every edge of each $H_i$ is explodable, and that $k$ explosions are
performed in total. Since $\eta(G) < \ell$, Theorem \ref{Meshulam}
implies that $k < \ell$. 

For $1 \le i \le k$, let $Z_i = N(x_i) \cap N(y_i)$, $X_i = N(y_i)
\setminus Z_i$, and $Y_i = N(x_i) \setminus Z_i$. Note that
$\{X_i,Y_i,Z_i\}_{i=1}^k$ is a partition of $V(G)$, that $x_i \in X_i$
and $y_i \in Y_i$, and that $Y_i\cup Z_i\subseteq N(x_i)$ and $X_i\cup
Z_i\subseteq N(y_i)$. We classify the edges of $G$ as follows. If an
edge joins two vertices in distinct sets among $\{X_i, Y_i, Z_i\}$ for
some $i$, then we label it as \textit{black}. All other edges are
labeled as \textit{pink}. Further, we assign a direction to each pink
edge that joins some $X_i \cup Y_i \cup Z_i$ to $X_j \cup Y_j \cup
Z_j$ with $i < j$ by orienting it froms its $i$ end to its $j$ end. 

Fix $1 \le i \le k$. Consider any vertex $w \in Y_i$ ($\subseteq
N(x_i)$). By the choice of the edge $x_iy_i$, the vertex $w$ has at
least $|X_i|$ neighbours outside of $N(x_i)$ in $H_{i-1}$. The edges
joining $w$ to $X_i$ are black. Denoting their number by $b_{X_i}(w)$,
this implies that at least $|X_i| - b_{X_i}(w)$ edges incident to $w$
in $H_{i-1}$ are pink out-edges. Similarly, for every vertex $w \in
X_i$, denoting the number of edges joining to $w$ to $Y_i$ by
$b_{Y_i}(w)$, at least $|Y_i| - b_{Y_i}(w)$ edges incident to $w$ in
$H_{i-1}$ are pink out-edges. Now consider any vertex $w \in Z_i$ ($=
N(x_i) \cap N(y_i)$). Let $b_{X_i}(w)$ and $b_{Y_i}(w)$ denote the
black degree of $w$ into $X_i$ and $Y_i$, respectively. Again by the
choice of the edge $x_iy_i$, the vertex $w$ has at least $|X_i|$
neighbours outside of $N(x_i)$ and at least $|Y_i|$ neighbours outside
of $N(y_i)$. This tells us that $w$ has pink outdegree at least $|X_i|
- b_{X_i}(w)$ and at least $|Y_i| - b_{Y_i}(w)$, and so in particular
at least $(|X_i| + |Y_i| - b_{X_i}(w) - b_{Y_i}(w))/2$. 

We get that the total pink outdegree of vertices in $X_i \cup Y_i \cup
Z_i$ is at least 
\begin{align*}
    \sum_{w \in X_i} (|Y_i| - b_{Y_i}(w)) + \sum_{w \in Y_i} (|X_i| -
    b_{X_i}(w)) + \sum_{w \in Z_i} (|X_i| + |Y_i| - b_{X_i}(w) -
    b_{Y_i}(w))/2. 
\end{align*}
Note that $\sum_{w \in X_i} b_{Y_i}(w) = \sum_{w \in Y_i} b_{X_i}(w) =
|E(X_i, Y_i)|$ and that $\sum_{w \in Z_i} b_{X_i}(w) + b_{Y_i}(w) =
|E(X_i \cup Y_i, Z_i)|$. Therefore, summing over $1 \le i \le k$, the
total pink outdegree of vertices of $G$ is at least 
\begin{align*}
    \sum_{i = 1}^k \left( 2|X_i||Y_i| - 2|E(X_i, Y_i)| + (|X_i| +
    |Y_i|)|Z_i|/2 - |E(X_i \cup Y_i, Z_i)|/2 \right). 
\end{align*}

Next we give an upper bound on the total pink indegree of vertices of
$G$. As before, each vertex $w \in Y_i$ has at least $|X_i|$ neighbours
outside of $N(x_i)$ in $H_{i-1}$, and thus it has at most $d(w) -
|X_i|$ neighbours in $\bigcup_{j<i} X_j\cup Y_j\cup Z_j$, since these
vertices are not present in $H_{i-1}$. Likewise, each vertex $w \in
X_i$ has at most $d(w) - |Y_i|$ neighbours in $\bigcup_{j<i}X_j\cup
Y_j\cup Z_j$. Finally, each vertex $w \in Z_i$ has at least
$(|X_i|+|Y_i|-b_{X_i}(w)-b_{Y_i}(w))/2$ pink out-neighbours, plus
$b_{X_i}(w)+b_{Y_i}(w)$ neighbours in $X_i \cup Y_i$. Thus its pink
indegree is at most
$d(w)-(|X_i|+|Y_i|+b_{X_i}(w)+b_{Y_i}(w))/2$. Therefore the total pink
indegree is at most 
\begin{align*}
    \sum_{i=1}^k \left( \sum_{w \in Y_i} (d(w) - |X_i|) + \sum_{w \in
      X_i} (d(w) - |Y_i|) + \sum_{w \in Z_i} (d(w) - \frac{|X_i| +
      |Y_i| + b_{X_i}(w) + b_{Y_i}(w)}{2} \right) \\ 
    = 2|E(G)| - \sum_{i=1}^k 2|X_i||Y_i| - \sum_{i=1}^k (|X_i| +
    |Y_i|)|Z_i|/2 - \sum_{i=1}^k |E(X_i \cup Y_i, Z_i)|/2. 
\end{align*}

Combining the lower bound on the total pink outdegree and upper bound
on the total pink indegree, we find that 
\begin{align*}
&\sum_{i = 1}^k \left( 2|X_i||Y_i| - 2|E(X_i, Y_i)| + (|X_i| +
  |Y_i|)|Z_i|/2 - |E(X_i \cup Y_i, Z_i)|/2 \right) \\ 
&\leq 2|E(G)| - \sum_{i=1}^k 2|X_i||Y_i| - \sum_{i=1}^k (|X_i| +
  |Y_i|)|Z_i|/2 - \sum_{i=1}^k |E(X_i \cup Y_i, Z_i)|/2. 
\end{align*}
Then some rearranging gives that
\begin{align*}
    4\sum_{i=1}^k |X_i||Y_i|+\sum_{i=1}^k (|X_i|+|Y_i|)|Z_i| \leq
    2|E(G)| + 2\sum_{i=1}^k |E(X_i,Y_i)|, 
\end{align*}
as required.
\end{proof}

We call a partition $\{X_i,Y_i,Z_i\}_{i=1}^k$ of $V(G)$ a \emph{basic
  partition} if it satisfies the conclusions of Theorem~\ref{key}. We  
next establish another technical result about basic partitions, for use
in the next two 
sections.

\begin{lem}\label{techlem}
Let $G$ be a graph with $n$ vertices with maximum degree $\Delta(G) = d$.
Suppose that $\{X_i,Y_i,Z_i\}_{i=1}^k$ is a
basic partition of $V(G)$. Then
$$2|E(G)|\geq 2Q+2dn-2d^2k+\sum_{i=1}^k|Z_i|(2d-|X_i|-|Y_i|-2|Z_i|),$$
where $Q=\sum_{i=1}^k |X_i||Y_i|-\sum_{i=1}^k |E(X_i,Y_i)|$.
\end{lem}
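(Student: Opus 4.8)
The plan is to start from the two conclusions of Theorem~\ref{key} applied to the basic partition $\{X_i,Y_i,Z_i\}_{i=1}^k$, namely the degree inclusions in (a) and the counting inequality in (b), and combine them with the maximum degree bound $\Delta(G)=d$. Writing $b=2|E(G)|+2\sum_i|E(X_i,Y_i)|$ for the right-hand side of (b), the aim is to bound $b$ from below in terms of $2|E(G)|$, $Q$, $dn$, $d^2k$, and the stated correction term, and then read off the claimed inequality after rearranging. The quantity $Q=\sum_i|X_i||Y_i|-\sum_i|E(X_i,Y_i)|$ measures exactly the deficiency of $E(X_i,Y_i)$ from being complete bipartite, so $\sum_i|E(X_i,Y_i)|=\sum_i|X_i||Y_i|-Q$, and substituting this into (b) should already eliminate the $|E(X_i,Y_i)|$ terms in favour of $Q$ and $\sum_i|X_i||Y_i|$.

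The main work will be extracting a good lower bound on $\sum_i|X_i||Y_i|$ and on $|V(G)|=n$ from condition (a) together with $\Delta(G)=d$. From (a), for each $i$ we have $x_i$ adjacent to all of $Y_i\cup Z_i$ and $y_i$ adjacent to all of $X_i\cup Z_i$; since $d(x_i),d(y_i)\le d$, this gives $|Y_i|+|Z_i|\le d$ and $|X_i|+|Z_i|\le d$ (the $+1$ from $x_i\in X_i$, $y_i\in Y_i$ being adjacent to each other can be tracked, but I expect the clean bound $|X_i|,|Y_i|\le d$, $|X_i|+|Z_i|\le d$, $|Y_i|+|Z_i|\le d$ is what is used). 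Summing $n=\sum_i(|X_i|+|Y_i|+|Z_i|)$ and using these block-size bounds lets me compare $dn=d\sum_i(|X_i|+|Y_i|+|Z_i|)$ against $\sum_i$ of things like $(|X_i|+|Z_i|)(|Y_i|+|Z_i|)$ or $2|X_i||Y_i|$; the difference $2d^2k-2dn$ then naturally appears as $\sum_i\bigl[(d-|X_i|-|Z_i|)+(d-|Y_i|-|Z_i|)\bigr]\cdot(\text{something})$, and the residual terms collect into $\sum_i|Z_i|(2d-|X_i|-|Y_i|-2|Z_i|)$. Concretely I expect to use the algebraic identity $2|X_i||Y_i| = (|X_i|+|Z_i|)(|Y_i|+|Z_i|) + (|X_i||Y_i| - |X_i||Z_i| - |Y_i||Z_i| - |Z_i|^2) + \text{lower order}$, rewrite $(|X_i|+|Z_i|)(|Y_i|+|Z_i|)\le d\cdot(\text{one factor})$ and iterate, so that everything telescopes to the desired form.

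The hard (or at least fiddly) part will be bookkeeping the $Z_i$ correction term exactly: one must verify that the leftover after bounding $\sum_i 2|X_i||Y_i|$ below by $2dn-2d^2k$ (up to $Z$-corrections) is precisely $\sum_i|Z_i|(2d-|X_i|-|Y_i|-2|Z_i|)$ and not something with a different coefficient. I would do this by expanding $(d-|X_i|-|Z_i|)(d-|Y_i|-|Z_i|)\ge 0$, which by (a) and $\Delta(G)=d$ holds termwise, summing over $i$, and then writing $d^2 - d(|X_i|+|Y_i|+2|Z_i|) + (|X_i|+|Z_i|)(|Y_i|+|Z_i|) \ge 0$; rearranging gives $(|X_i|+|Z_i|)(|Y_i|+|Z_i|) \ge d(|X_i|+|Y_i|) + 2d|Z_i| - d^2$, and expanding the left side as $|X_i||Y_i|+|X_i||Z_i|+|Y_i||Z_i|+|Z_i|^2$ isolates $|X_i||Y_i| \ge d(|X_i|+|Y_i|) + 2d|Z_i| - d^2 - |Z_i|(|X_i|+|Y_i|+|Z_i|)$. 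Multiplying by $2$ and summing over $i$, then adding $2Q$ and combining with the rewritten form of (b), should yield $2|E(G)| \ge 2Q + 2dn - 2d^2k + \sum_i |Z_i|(2d - |X_i| - |Y_i| - 2|Z_i|)$ after collecting the $|Z_i|$-terms, which is exactly the claim. The only genuine risk is an off-by-one in the degree bounds coming from the edge $x_iy_i$ itself, but since we only need an inequality in the stated (non-strict) direction, absorbing such slack is harmless.
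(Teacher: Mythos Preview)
Your proposal is correct and follows essentially the same route as the paper: rewrite property~(b) of the basic partition using $Q$ to get $2|E(G)|\ge 2Q+2\sum_i|X_i||Y_i|+\sum_i(|X_i|+|Y_i|)|Z_i|$, then lower-bound each $|X_i||Y_i|$ using property~(a) together with $\Delta(G)=d$, and simplify. The only cosmetic difference is in how that lower bound is obtained: the paper argues that $x(r-x)$ is minimised at the largest admissible $x=d-|Z_i|$ (after assuming WLOG $|X_i|\ge|Y_i|$), whereas you expand $(d-|X_i|-|Z_i|)(d-|Y_i|-|Z_i|)\ge 0$; both yield the identical inequality $|X_i||Y_i|\ge d r_i+2d|Z_i|-d^2-|Z_i|r_i-|Z_i|^2$, and your version has the minor advantage of not needing the WLOG assumption.
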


\begin{proof}
For each $i$, we set $r_i = |X_i|+|Y_i|$ and assume without loss
of generality that $|X_i|\geq|Y_i|$. 

Observe that, for any positive integers $x, r$ with $r-x\leq x<r$, we
have that $x(r-x)>(x+1)(r-x-1)$. Hence with these conditions the
expression $x(r-x)$ is smallest when $x$ is as large as possible. By
Property (a) of the basic partition we know that $|X_i|\leq d-|Z_i|$,
and this implies that  
$$|X_i||Y_i|\geq(d-|Z_i|)(r_i+|Z_i|-d)=dr_i+2d|Z_i|-d^2-|Z_i|r_i-|Z_i|^2.$$

Recalling $Q=\sum_{i=1}^k |X_i||Y_i|-\sum_{i=1}^k |E(X_i,Y_i)|$ we
obtain from (b) that
\begin{align*}
  2|E(G)| & \geq 2Q+2\sum_{i=1}^k |X_i||Y_i|+\sum_{i=1}^k (|X_i|+|Y_i|)|Z_i|\\
 & \geq 2Q+ 2\sum_{i=1}^k(dr_i+2d|Z_i|-d^2-|Z_i|r_i-|Z_i|^2)+\sum_{i=1}^k|Z_i|r_i\\
&=2Q+2d\sum_{i=1}^k(r_i+|Z_i|)-2d^2k+2d\sum_{i=1}^k|Z_i|-\sum_{i=1}^k|Z_i|r_i-2\sum_{i=1}^k|Z_i|^2\\
  &=2Q+ 2dn-2d^2k+\sum_{i=1}^k|Z_i|(2d-r_i-2|Z_i|),
\end{align*}
where in the last line we use the fact that $\{X_i,Y_i,Z_i\}_{i=1}^k$
is a partition of $V(G)$. This finishes the proof.
\end{proof}

\section{Density and the proof of Theorem~\ref{goodpair}}\label{sec-dense}

We begin by noting that Theorem~\ref{key} and Lemma~\ref{techlem}
immediately imply Theorem~\ref{dense}.

\begin{proof}[Proof of Theorem~\ref{dense}] Let
$H$ with $\Delta(H)=d$ and $\eta(H)\leq k$ be given. Let $s\leq k$ be
  such that $\{X_i,Y_i,Z_i\}_{i=1}^s$ is a 
basic partition of $V(H)$, which exists by Theorem~\ref{key}. We apply
  Lemma~\ref{techlem}, first noting that clearly $Q\geq 
  0$. By Property (a) of the basic partition we find that
  $|X_i|+|Y_i|+2|Z_i|\leq2d$, 
  and hence the terms $(2d-|X_i|-|Y_i|-2|Z_i|)$ in the conclusion of
  Lemma~\ref{techlem} are all non-negative. Hence $2|E(G)|\geq
  2dn-2d^2s\geq 2dn-2d^2k$, thus completing the proof.
\end{proof}

%Combining Theorem~\ref{dense} with Theorem~\ref{exists-IT} we get the
%following.

%\begin{thm}\label{ITB}
%Let $H$ be a graph with a vertex partition $\cP$. If $H$ has no
%independent transversal with respect to $\cP$, then there exists a
%subset $S$ of partition classes of $\cP$ such that the subgraph $H_S$
%of $H$ induced by $\bigcup_{U\in S}U$ satisfies $|E(H_S)|\geq
%d|V(H_S)|-d^2(|S|-1)$, where $d=\Delta(H)$. 
%\end{thm}

We may now give the proof of Theorem~\ref{goodpair} (assuming the
result of Theorem~\ref{tight-construction}).

\begin{proof}[Proof of Theorem~\ref{goodpair}]
If (1) holds ($\alpha\leq1/2$), then for every $t$, and every $G$ and
$\cP$ as in Definition~\ref{defgoodpair}, we see that $G$ and $\cP$
satisfy the conditions of Theorem~\ref{max-degree-IT}. Hence $G$ has
an IT with respect to $\cP$. If (2) holds ($\beta\leq1/4$), then every
$t$, $G$ and 
$\cP$ satisfy the assumptions of Theorem~\ref{WWthm}, hence again there
exists an IT of $G$ with respect to $\cP$.

Suppose (3) holds ($\beta\leq2\alpha(1-\alpha)$). By the previous
paragraph we may assume 
$\alpha>1/2$ and $\beta>1/4$, and hence also clearly $\alpha<1$. Let $t$,
$G$ and $\cP$ be as in  
Definition~\ref{defgoodpair}, so that in particular
$\Delta(G)\leq\alpha t$, and suppose on the contrary that $G$ has
no IT with respect to $\cP$. Then by Theorem~\ref{exists-IT}
there exists a 
subset $S$ of blocks of $\cP$ such that the subgraph $G_S$
of $G$ induced by $\bigcup_{U\in S}U$ satisfies
$\eta(G_S)\leq|S|-1$. Let us define $\gamma$ by $\gamma
t=\Delta(G_S)$. Then clearly $\gamma\leq \alpha$. Also, since $|V(G_S)|\geq
t|S|$, we see that $\eta(G_S)\geq \frac{t|S|}{2\gamma t}$ by
Theorem~\ref{extremal-graphs}, from which we conclude that
$\gamma\geq\frac{|S|}{2(|S|-1)}>\frac12$. 

Observe that for the set $S$ (or indeed any set) of blocks of $\cP$ we
have $2|E(G_S)|\leq b(G,\cP)\sum_{W\in 
  S}|W|\leq\beta t|V(G_S)|$. Recall that each block of
$\cP$ has size at least $t$. Hence, noting that $2\gamma-\beta>0$ since 
$\beta\leq \alpha< 1$ and $\gamma>1/2$, and applying Theorem~\ref{dense}
to $G_S$, we get
\begin{align*}
  \beta t|V(G_S)| & \geq2|E(G_S)|\geq2\gamma t|V(G_S)|-2(\gamma t)^2(|S|-1)\\
  (2\gamma-\beta)t|V(G_S)| & \leq2(\gamma t)^2(|S|-1)\\
  (2\gamma-\beta)t^2|S| & \leq2\gamma^2t^2(|S|-1)\\
  (2\gamma-\beta) & <2\gamma^2 \\
  \beta & > 2\gamma(1-\gamma).
\end{align*}
Since $\gamma\leq\alpha<1$ and $\gamma>1/2$, we observe
$2\gamma(1-\gamma)\geq 2\alpha(1-\alpha)$. Hence we find a  
contradiction to the assumption $\beta\leq 2\alpha(1-\alpha)$. This
concludes the proof that $(\alpha,\beta)$ is good. 

Finally suppose that none of (1--3) hold. Then by
Theorem~\ref{tight-construction} there exist $t$, $G$ and $\cP$  as in 
Definition~\ref{defgoodpair} such that $G$ has no IT with respect to
$\cP$, showing that $(\alpha,\beta)$ is not a good pair.
\end{proof}

\section{Stability and the proof of
  Theorem~\ref{witheps}}\label{sec-stability}

\begin{proof}[Proof of Theorem~\ref{witheps}]
Let $G$ be a graph with $\eta(G)\leq k$, and set $n=|V(G)|$,
$d=\Delta(G)$, and define $\epsilon$ by $n(1+\epsilon)=2dk$. Then
$\epsilon\geq 0$ by  Theorem~\ref{extremal-graphs}.
Theorem~\ref{dense} tells us that $|E(G)|\geq dn-d^2k=dn-d(n+\epsilon
n)/2$ and hence $2|E(G)|\geq dn-\epsilon dn$ as claimed in
Theorem~\ref{witheps}.

By
Theorem~\ref{key} there exists a basic partition
$\{X_i,Y_i,Z_i\}_{i=1}^s$ of $V(G)$ for some 
$s\leq k$. If $s<k$ then define $ X_i=Y_i=Z_i=\emptyset$ for $s+1\leq
i\leq k$.
Set $z:=\sum_{i=1}^s|Z_i|=\sum_{i=1}^k|Z_i|$. Using Property (a) of our basic partition we get
$$z=\sum_{i=1}^s(|X_i|+|Y_i|+2|Z_i|)-\sum_{i=1}^s(|X_i|+|Y_i|+|Z_i|)\leq2ds-n\leq
2dk-n.$$
Hence $z\leq\epsilon n$ since $n(1+\epsilon)=2dk$.

Our next aim is to establish Conclusion (i) of Theorem~\ref{witheps}.
We assume without loss 
of generality that $|X_i|\geq|Y_i|$. Setting $r_i=|X_i|+|Y_i|$ and
proceeding exactly as in the proof of Lemma~\ref{techlem}, we may
conclude for each $i\leq s$ that
$$|X_i||Y_i|\geq(d-|Z_i|)(r_i+|Z_i|-d)=dr_i+2d|Z_i|-d^2-|Z_i|r_i-|Z_i|^2.$$
Setting $m = |E(G)|-\sum_{i=1}^s|E(X_i,Y_i)|$, we find from Property
(b) of the basic partition that
\begin{align*}
4|E(G)|-2m & \geq
4\sum_{i=1}^s(dr_i+2d|Z_i|-d^2-|Z_i|r_i-|Z_i|^2)+\sum_{i=1}^s|Z_i|r_i\\ 
&=4d\sum_{i=1}^s(r_i+|Z_i|)-4d^2s+4d\sum_{i=1}^s|Z_i|-3\sum_{i=1}^s|Z_i|r_i-4\sum_{i=1}^s|Z_i|^2\\  
&= 4dn-4d^2s+\sum_{i=1}^s|Z_i|(4d-3r_i-4|Z_i|),
\end{align*}
where in the last line we again use the fact that $\{X_i,Y_i,Z_i\}_{i=1}^s$
is a partition of $V(G)$. 

By Property (a) we know that $r_i\leq 2d-2|Z_i|$ for each $i$, from
which we get that 
\begin{align*}
4|E(G)|-2m &\geq 4dn-4d^2s + \sum_{i=1}^s|Z_i|(4d-6d+2|Z_i|)\\
&= 4dn-4d^2s+\sum_{i=1}^s|Z_i|(2|Z_i|-2d)\\
&= 4dn-4d^2s+2\sum_{i=1}^s|Z_i|^2-2d\sum_{i=1}^s|Z_i|\\
%& \geq 4dn-4sd^2+\frac{2d}s(\sum_{i=1}^s|Z_i|)^2-2d\sum_{i=1}^s|Z_i|.\\
%\end{align*}
&\geq 4dn-4d^2s+\frac2s(\sum_{i=1}^s|Z_i|)^2-2d\sum_{i=1}^s|Z_i|\\
&= 4dn-4d^2s+\frac{2z^2}s-2dz.
\end{align*}
Therefore
$$m\leq 2|E(G)|-dn -dn+2d^2s-\frac{z^2}s+dz.$$
Since $d=\Delta(G)$ we know $2|E(G)|\leq dn$, so using also the facts
that $s\leq k$ and $z\leq\epsilon
n$, and that $n(1+\epsilon)=2dk$, we conclude
$$|E(G)\setminus\bigcup_{i=1}^s E(X_i,Y_i)|=m\leq 2d^2s-dn+dz\leq
2d^2k-dn+\epsilon dn=2\epsilon dn,$$
which is the statement of Conclusion (i) since $X_i=Y_i=\emptyset$ for $s+1\leq i\leq k$.

For Conclusion (ii), we know by
Lemma~\ref{techlem} that
$$2|E(G)|\geq 2Q+2dn-2d^2s+\sum_{i=1}^s|Z_i|(2d-|X_i|-|Y_i|-2|Z_i|),$$
where $Q=\sum_{i=1}^s |X_i||Y_i|-\sum_{i=1}^s |E(X_i,Y_i)|$.
As before, the quantity $(2d-|X_i|-|Y_i|-2|Z_i|)$ is
nonnegative by Property (a) of basic partitions, and so
$$2Q\leq2|E(G)|-2dn+2d^2s=(2|E(G)|-dn)+(2d^2s-dn)\leq 0+2d^2k-dn=\epsilon dn,$$
which is the statement of (ii).
\end{proof}

We end this section with the proof of Theorem~\ref{stab-IT}.

\begin{proof}[Proof of Theorem~\ref{stab-IT}]
Let $0<\beta<1$ be given, and let $G$ be a graph with a $t$-thick vertex
partition $\cP$, for which
$d=\Delta(G)\leq (1+\beta)t/2$. Suppose $G$ has no IT with
respect to $\mathcal{P}$. Then by Theorem~\ref{exists-IT}, there exists a
subset $S$ of blocks of $\cP$ for which $\eta(G_S)\leq k$, where
$k=|S|-1$. Set $n=|V(G_S)|$ and define
$\epsilon$ by $n(1+\epsilon)=2dk$. Since $\cP$ is a
$t$-thick partition we know $n\geq t|S|$, and so it follows that
      $$\epsilon=\frac{2dk}{n}-1\leq
\frac{2(1+\beta)t(|S|-1)}{2t|S|}-1<1+\beta -1=\beta.$$ 
By Theorem~\ref{witheps} we know $|E(G_S)|\geq
(1-\epsilon)dn/2>(1-\beta)dn/2$. 

Let 
  $\{X_i,Y_i,Z_i\}_{i=1}^k$ be the partition of $V(G_S)$ guaranteed
  by Theorem~\ref{witheps}, and let $J$ be the union of the $k$ complete
    bipartite graphs induced by  $\{X_i,Y_i\}_{i=1}^k$. Then by
      Theorem~\ref{witheps}, the symmetric difference of $E(G_S)$ and
      $J$ has size at most
      $$2\epsilon dn+\epsilon dn/2=5\epsilon dn/2\leq
      5\frac{\epsilon}{1-\epsilon}|E(G_S)|.$$ 
      Hence $G_S$ is $5\epsilon/(1-\epsilon)$-approximated by
      $J$. Noting that $5\epsilon/(1-\epsilon)< 5\beta/(1-\beta)$
      completes the proof.
\end{proof}

\section{Constructing graphs with no independent
  transversals}\label{sec-construct} 

This section is devoted to the proof of Theorem~\ref{tight-construction}.
Let us call a pair $(\alpha,\beta)$ \emph{relevant} if it satisfies
the conditions of Theorem~\ref{tight-construction}, in other words 
$\alpha > \frac{1}{2}$ and $\beta > \max \left\{ \frac{1}{4},
2\alpha(1 - \alpha) \right\}$. Our aim is to construct, for every
relevant pair $(\alpha,\beta)$ and every
sufficiently large $t$, a graph $G$ and a partition $\cP =
\{U_1, \ldots, U_r\}$ of $V(G)$ such that
\begin{itemize}
	\item[(a)] for each $i$ we have $|U_i|\geq t$;
	\item[(b)] $G$ has maximum degree $\Delta(G)\leq\alpha t$; 
	\item[(c)] $b(G,\cP)\leq\beta t$; and
          \item[(d)] $G$ has no IT with respect to $\cP$.
\end{itemize}
As mentioned in the Introduction, in ~\cite{GrKaTrWa}
Groenland, Kaiser, Treffers and Wales modified their proof of
Theorem~\ref{GKTWthm} to provide also a proof of
Theorem~\ref{tight-construction} for every $\epsilon>0$ and every
relevant pair $(\alpha,\frac14+\epsilon)$. As $\epsilon$ approaches
zero, this gives the asymptotic value 
$\frac{1}{2} + \frac{1}{2\sqrt{2}}<0.854$ for $\alpha$. A
slight generalization of their construction, discussed in
Section~\ref{sec-conclusion}, 
is sufficient to establish Theorem~\ref{tight-construction} for
$\alpha \ge \frac{2}{3}$. However, modifications are needed to get a
construction that works for all $\alpha > \frac{1}{2}$.

Our main tool is the following simple lemma, that allows us to build
complicated graphs with no IT starting from simpler ones. 
\begin{lem} \label{join-lemma}
Let $J$ and $H$ be disjoint graphs, and let $\mathcal{P} = \{U_1,
\ldots, U_r\}$ and $\mathcal{Q} = \{W_1, \ldots, W_s\}$ be partitions
of $V(J)$ and $V(H)$ respectively, such that $J$ has no IT with
respect to $\mathcal{P}$ and $H$ has no IT with respect to
$\mathcal{Q}$. Let $\mathcal{R} = \{U_1', \ldots, U_r', W_1, \ldots,
W_{s-1}\}$, where $U_1' \supseteq U_1, \ldots, U_r' \supseteq U_r$ are
obtained by distributing each of the vertices in $W_s$ into one of
$U_1, \ldots, U_r$ arbitrarily. Then $J \cup H$ has no IT with respect
to $\mathcal{R}$. 
\end{lem}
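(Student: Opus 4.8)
The plan is to argue by contradiction: suppose $J \cup H$ has an IT $T$ with respect to $\mathcal{R} = \{U_1', \ldots, U_r', W_1, \ldots, W_{s-1}\}$. Since $T$ is a transversal, it contains exactly one vertex from each part, so we may write $T = \{u_1, \ldots, u_r, w_1, \ldots, w_{s-1}\}$ with $u_i \in U_i'$ and $w_j \in W_j$. The first observation is that $J$ and $H$ are disjoint graphs with no edges between them, so $T$ restricted to $V(J)$ and $T$ restricted to $V(H)$ are each independent sets in their respective graphs; in particular there are no edges of $J \cup H$ between the two parts to worry about, and independence of $T$ is equivalent to independence of $T \cap V(J)$ in $J$ together with independence of $T \cap V(H)$ in $H$.

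Next I would examine where the vertex $u_i \in U_i'$ lives. By construction $U_i' = U_i \cup (\text{some subset of } W_s)$, and the sets $U_i'$ together with $W_1, \ldots, W_{s-1}$ partition $V(J) \cup V(H) = V(J\cup H)$. So each $u_i$ is either in $U_i \subseteq V(J)$ or in $W_s \subseteq V(H)$. Let $A = \{i : u_i \in U_i\}$ and $B = \{i : u_i \in W_s\}$. The key point is that for every $i \in A$, $u_i$ is a vertex of $J$ lying in block $U_i$, and for every $j \in [s-1]$, $w_j$ is a vertex of $H$ lying in block $W_j$; meanwhile each $u_i$ with $i \in B$ is a vertex of $H$ lying in block $W_s$.

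Now I would derive the two contradictions depending on whether $B$ is empty. If $B = \emptyset$, then $A = [r]$ and $\{u_1, \ldots, u_r\} \subseteq V(J)$ is an independent set of $J$ with one vertex in each block $U_i$ — that is, an IT of $J$ with respect to $\mathcal{P}$, contradicting the hypothesis on $J$. If $B \neq \emptyset$, pick any $i_0 \in B$; then $\{w_1, \ldots, w_{s-1}, u_{i_0}\} \subseteq V(H)$ is an independent set of $H$ (being a subset of the independent set $T$) with one vertex in each of $W_1, \ldots, W_{s-1}$ and one vertex $u_{i_0} \in W_s$, hence an IT of $H$ with respect to $\mathcal{Q}$, contradicting the hypothesis on $H$. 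In either case we reach a contradiction, so $J \cup H$ has no IT with respect to $\mathcal{R}$, as required.

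There is essentially no hard step here — the lemma is a bookkeeping argument — but the one place to be careful is the case analysis on $B$: one must check that $\{w_1,\ldots,w_{s-1}\}$ together with a \emph{single} chosen $u_{i_0}$ already forms a transversal of $\mathcal{Q}$, without needing to know anything about the other $u_i$ with $i \in B$, and that dropping the remaining vertices of $T$ preserves independence (which is automatic since subsets of independent sets are independent). It is also worth noting explicitly that $B \neq \emptyset$ is possible only because $|W_s|$ vertices were redistributed, and that when $r$ is small relative to $s$ this case genuinely occurs, so both branches of the argument are needed.
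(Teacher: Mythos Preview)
Your proof is correct and follows essentially the same approach as the paper's: a contradiction argument splitting into the cases where every $u_i$ lies in $U_i$ (yielding an IT of $J$) versus where some $u_j \in U_j' \cap W_s$ (yielding an IT of $H$). Your version is just slightly more verbose in introducing the sets $A$ and $B$ and spelling out why independence is inherited by subsets.
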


\begin{proof}
Assume for contradiction that $J \cup H$ has an IT $\{u_1, \ldots,
u_r, w_1, \ldots, w_{s-1}\}$ with respect to $\mathcal{R}$, where $u_i \in
U_i'$ for $1 \le i \le r$ and $w_i \in W_i$ for $1 \le i \le s-1$. If
$u_i \in U_i$ for every $1 \le i \le r$, then $\{u_1, \ldots, u_r\}$
is an IT of $J$ with respect to $\mathcal{P}$, a contradiction. So
suppose instead that $u_j \in U_j' \cap W_s$ for some $1 \le j \le
r$. Then $\{w_1, \ldots, w_{s-1}, u_j\}$ is an IT of $H$ with respect
to $\mathcal{Q}$, again a contradiction. 
\end{proof}

In the proof that follows, for a given relevant pair
$(\alpha,\beta)$ and (sufficiently large) $t$, we
will be applying Lemma \ref{join-lemma} possibly many 
times. In each application,
the complete bipartite graph $K=K_{t-\lfloor\alpha
  t\rfloor+1,\lfloor\alpha t\rfloor}$ with its standard bipartition
into two blocks will be used in place of $J$. Clearly $K$ has no IT
with respect to this partition. We will refer to the
block $A$ of $K$ as the 
``$A$-side" and the block $B$ as the ``$B$-side", where $t-\lfloor\alpha
  t\rfloor+1=|A|<|B|=\lfloor\alpha t\rfloor$. 

Our proof of Theorem~\ref{tight-construction} builds directly upon the
construction from~\cite{GrKaTrWa} that established their special case of
Theorem~\ref{tight-construction} (i.e. when $\beta$ is close to
$1/4$). Given the pair 
$(\alpha,\beta)$ with $\alpha\geq\beta>1/4$, this construction provides,
for all sufficiently large $t$, a graph $G'$ and  
partition $\cP'$ with Properties (b), (c) and (d), and with Property
(a) replaced by
\begin{itemize}
	\item[(a$'$)] Every block $U$ with $|U|<t$ consists of
          vertices of degree 1 and satisfies $|U|=\lfloor\alpha t\rfloor$.
	\end{itemize}
For completeness we describe $(G',\cP')$ explicitly (formulated using
Lemma~\ref{join-lemma}) in the Appendix.

\begin{proof}[Proof of Theorem~\ref{tight-construction}]
  Let $(\alpha,\beta)$ be a relevant pair, and let $t$ be large
  enough such that $(G',\cP')$ satisfying Properties (a$'$), (b), (c)
  and (d) exists (as given in~\cite{GrKaTrWa}). We
  call blocks of 
  size less than $t$ \emph{deficient}. If the set of 
deficient blocks in $(G',\cP')$ is empty then 
$(G',\cP')$ itself proves
Theorem~\ref{tight-construction} for $(\alpha,\beta)$, so we may  
assume the contrary from now on. In particular by (a$'$) we may assume that
$\alpha<1$.

Our plan is to create a new vertex-partitioned graph by adding a
number of copies of  $K=K_{t-\lfloor\alpha   t\rfloor+1,\lfloor\alpha
  t\rfloor}$ to $G'$ and suitably extending the current partition.
%At
%each step, one deficient block $U$ will be enlarged to size $t$, one new
%block larger than $U$ will be added,
%and all other blocks will remain unchanged.
%After finitely many steps, we will end with a
%partition in which no blocks are deficient.
Throughout this process, the current graph $G$ and partition 
$\cP$ will be \emph{valid}, meaning that $(G,\cP)$ satisfies Properties (b),
(c) and (d) together with 
\begin{itemize}
	\item[(a$''$)] Every deficient block $U$ satisfies
          $|U|\geq\lfloor\alpha t\rfloor$, and $d(u)\leq
          t-\lfloor\alpha t\rfloor+1$ for each $u\in U$. 
	\end{itemize}
Observe that the initial pair $(G',\cP')$ is valid.
\begin{figure*}
  \begin{center}
    \leavevmode
    \includegraphics[scale=0.4]{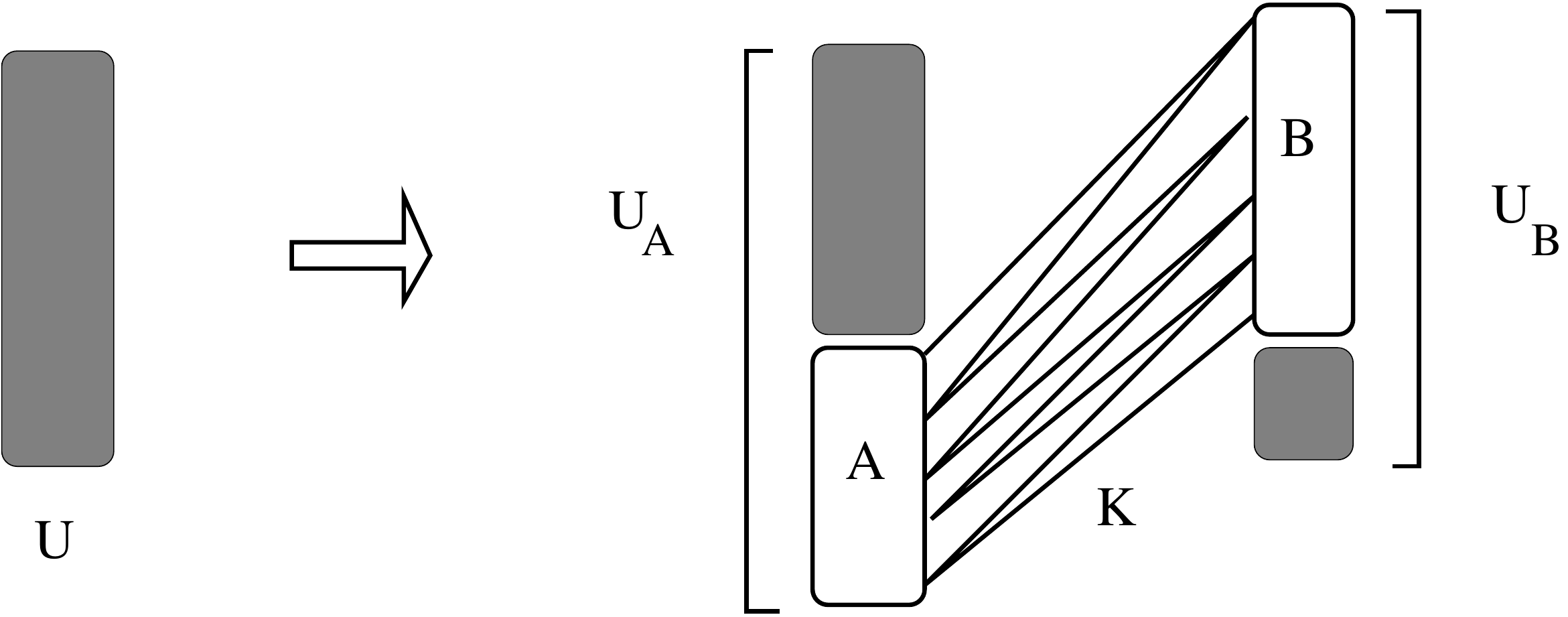}
  \end{center}
  \caption{The deficient block $U$ is replaced
  with blocks $U_A$ of size $t$ and $U_B$ of size $|U|+1$.}
\end{figure*}

Our main construction step is as follows (see Figure 1). Given a valid pair
$(G,\cP)$, if no blocks are deficient then $(G,\cP)$ provides a proof
of Theorem~\ref{tight-construction} and we stop. Otherwise, we take a
deficient block $U$, add a disjoint copy of $K$ to $G$, distribute
$\lfloor\alpha t\rfloor-1$ vertices of $U$ to the $A$-side of $K$ to
form a new block $U_A$, and
the remaining $|U|-\lfloor\alpha t\rfloor+1$ vertices of $U$ to the $B$-side
of $K$, forming block $U_B$. Note that this process
removes $U$, adds 
one new block $U_A$
of size $t$ and one new block $U_B$ of size $|U|+1$. All other blocks remain
unchanged. Thus the resulting pair
$(G^+,\cP^+)$ satisfies (a$''$), and Lemma~\ref{join-lemma} tells us
that (d) is also satisfied. Property (b) holds because
$\Delta(K)=\lfloor\alpha t\rfloor$ and $(G,\cP)$ satisfies
(b). To verify (c), note that since 
$(G,\cP)$ satisfied (a$''$), the new
block $U_A$ has average degree at most
\begin{align*}
	\frac{1}{t}((t - \lfloor\alpha t\rfloor + 1) 
        \lfloor\alpha t\rfloor + (\lfloor\alpha t\rfloor - 1)  (t
        - \lfloor\alpha t\rfloor + 1))=\frac{1}{t}(2\lfloor\alpha t\rfloor - 1) 
        (t-\lfloor\alpha t\rfloor + 1)<\frac{2\alpha t}t(t-\alpha t+2).
\end{align*}
This quantity is
$2\alpha(1-\alpha)t+4\alpha$, which for large enough $t$ is at most
$\beta t$ as required. 

Each vertex in the other new block $U_B$ has degree exactly $t -
\lfloor\alpha t\rfloor + 1$ (if it came from $K$) or at most $t -
\lfloor\alpha t\rfloor + 1$ by (a$''$) (if it came from $U$). Since $t
- \lfloor\alpha t\rfloor + 1< t-\alpha t+2<2 \alpha (1 - \alpha)t+2$,
we have verified (c), and 
hence that $(G^+,\cP^+)$ is valid. Therefore we may continue the
construction.

Observe that after each step of this construction, the deficient block
$U$ is replaced with a larger block $U_B$, and all other
deficient blocks are unchanged. Thus we may repeat the construction
step finitely many times to obtain a valid pair $(G,\cP)$ that has no
deficient blocks. This
completes the proof of  Theorem~\ref{tight-construction}.
\end{proof}

\section{Concluding remarks}\label{sec-conclusion}

The constructions given in Section~\ref{sec-construct} and the
Appendix are by no means unique. In the main construction step in
Section~\ref{sec-construct} for example, given a deficient block $U$,
we added $\lfloor\alpha 
t\rfloor-1$ vertices of $U$ to the $A$-side of $K=K_{t-\lfloor\alpha
  t\rfloor+1,\lfloor\alpha t\rfloor}$ and 
the remaining $|U|-\lfloor\alpha t\rfloor+1$ vertices of $U$ to the $B$-side
of $K$. We could instead fix any positive integer $C$ and, in each
step, add $\lfloor\alpha 
t\rfloor-C$ vertices of $U$ to the $A$-side of $K'=K_{t-\lfloor\alpha
  t\rfloor+C,\lfloor\alpha t\rfloor}$ and 
the remaining $|U|-\lfloor\alpha t\rfloor+C$ vertices of $U$ to the $B$-side
of $K'$. This would require our minimum value for $t$ to be
larger, but as a trade-off would give constructions with fewer blocks.
The bipartite component
$K'$ and its number of copies would also
be different. However, we know by Theorem~\ref{stab-IT} that 
when $\alpha$ is close to $\frac{1}{2}$, the constructed graph with no
IT should have many copies of ``near" $K_{t/2, t/2}$'s, so in any case
we would get many components that are close in structure to complete
bipartite graphs.

For the case in which
$\alpha$ is slightly
greater than $\frac{1}{2} + \frac{1}{2\sqrt{2}}$ and $\beta$ slightly
greater than $\frac{1}{4}$, a proof of
Theorem~\ref{tight-construction} is given in~\cite{GrKaTrWa} as
follows. Recall that the basic graph $G'$ with partition $\cP'$ satisfies
Properties (a$'$), (b), (c) and (d) described in
Section~\ref{sec-construct}. For each deficient block $U$ of $\cP'$, a
single complete 
bipartite graph $K'=K_{\lfloor\beta' t/\alpha\rfloor,
  \lfloor\alpha t\rfloor}$ is added, where 
$\beta' = \max \left\{ \frac{1}{4}, 2\alpha(1 - \alpha) \right\}$, and
the vertices of $U$ are distributed into the two blocks of $K'$ so
that the size of each becomes at least $t$. This is possible because
$\left( t - \frac{\beta' t}{\alpha} \right) + (t - \alpha t) \le (1 -
2(1 - \alpha))t + (1 - \alpha)t = \alpha t$, and $|U|=\lfloor\alpha t\rfloor$. As before,
Lemma~\ref{join-lemma} ensures that the new graph has no IT with respect to
the new partition. The maximum block average
degree does indeed stay below $\beta$ after this distribution. However, for
this strategy to work for a wider range of $(\alpha,\beta)$  we would need that $\beta' t / \alpha \le \alpha t$
to ensure $\Delta(K')\leq\alpha t$ (Property (b)). Hence this
proves Theorem~\ref{tight-construction} in general only when
$\alpha \ge \frac{2}{3}$. 

We remark that our simple construction lemma in
Section~\ref{sec-construct}, Lemma~\ref{join-lemma}, provides a
convenient way to describe (and in some cases generalize) various
known constructions for graphs without IT's. For example this is done
in the Appendix to derive the construction of~\cite{GrKaTrWa}. The same
approach can also provide new 
constructions for other contexts. These topics are explored further
in~\cite{HaWd}. 

As mentioned in the Introduction, it is possible to modify
Theorem~\ref{key} to show that if $G$ has a low value of $\eta(G)$ then
it will contain many disjoint complete bipartite subgraphs. The
precise technical statement is as follows.

\begin{thm}\label{key2} Fix $\epsilon>0$ and let $G$ be a graph with
  maximum degree $d$ such that 
  $n=|V(G)|=(2-\epsilon)kd$ and $\eta(G)\leq k$. Let $\gamma>3$ be
  given. Then there exist $k_0\leq k$ and a partition 
  $V(G)=W\cup\bigcup_{i=1}^{k_0}(X_i\cup Y_i\cup Z_i)$ such that the
  following hold. 
\begin{itemize}
\item[(a)] for each $i$ there exist $x_i\in X_i$ and $y_i\in Y_i$ with
  $Y_i\cup Z_i\subseteq N(x_i)$ and $X_i\cup Z_i\subseteq N(y_i)$,
%  \item[(b)] each $Z_i$ satisfies $|Z_i|\leq\frac{d-\theta d}2$,
\item[(b)] For $\zeta:=\frac1{dk}\sum_i|Z_i|$ we have
  $\zeta\leq \epsilon$ and 
  $k-k_0\leq\frac{1+\gamma}4(\epsilon-\zeta)k$, and 
$$|W|\leq\gamma(\epsilon-\zeta)dk.$$
\item[(c)] the set $U$ of vertices that are not conforming satisfies
$$|U|\leq\frac{\gamma^2-1}{\gamma^2-3\gamma}|W|+
\frac{1+\gamma}{\gamma-3}\epsilon dk\leq\frac{\gamma^2-1}{\gamma-3}(\epsilon-\zeta)dk+\frac{1+\gamma}{\gamma-3}\epsilon dk,$$
  where $w$ is said to be \emph{conforming} if for some $i$ we have
  $w\in X_i$ and $Y_i\cup Z_i\subseteq N(w)$, or $w\in Y_i$ and
  $X_i\cup Z_i\subseteq N(w)$. 
\end{itemize}
\end{thm}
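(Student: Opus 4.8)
The plan is to run essentially the same explosion process as in the proof of Theorem~\ref{key}, obtaining a basic partition $\{X_i,Y_i,Z_i\}_{i=1}^{k_0}$ of $V(G)$ with $k_0 \le k$ (here we rename the index bound $k_0$ rather than $k$, since in this version we want $k_0$ possibly strictly less than $k$ and to control the deficit $k-k_0$). Property (a) is immediate from Theorem~\ref{key}. The new ingredient is the set $W$: rather than keeping all of $V(G)$ partitioned into triples, we will move the ``defective'' triples — those whose contribution $|X_i||Y_i|$ falls short of the ideal $d^2$ by more than a controlled amount — into a reservoir $W$, and similarly we will place into $W$ the vertices from the surviving triples that fail to be conforming. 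The point of Theorem~\ref{key}(b), via Lemma~\ref{techlem}, is a global inequality: writing $Q=\sum_i(|X_i||Y_i|-|E(X_i,Y_i)|)$ and using $n=(2-\epsilon)kd$, one gets $2Q + \sum_i|Z_i|(2d-r_i-2|Z_i|) \le 2|E(G)| - 2dn + 2d^2 k_0 \le (2d^2 k_0 - dn) = d^2(2k_0 - n/d) = d^2(2k_0 - (2-\epsilon)k)$. Rearranged, this says that the total deficiency $\sum_i(d^2 - |X_i||Y_i|) + (\text{nonneg.\ }Z\text{-terms}) + (\text{nonneg.\ }E(X_i,Y_i)\text{ terms})$ is at most $d^2(\epsilon k - 2(k-k_0)) + (\text{bookkeeping})$, which after introducing $\zeta = \frac1{dk}\sum_i|Z_i|$ is bounded by roughly $(\epsilon-\zeta)d^2 k$ up to the $k-k_0$ correction. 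So the ``budget'' available for everything non-ideal is of order $(\epsilon-\zeta)d^2k$.

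From here the derivation of (b) and (c) is a matter of how one chooses to spend that budget. For (b): each of the $k-k_0$ ``missing'' triples costs a full $2d^2$ in the above inequality (that is the meaning of performing only $k_0 < k$ explosions before $H_{k_0}$ became edgeless), which immediately forces $k-k_0 \le \tfrac{1+\gamma}{4}(\epsilon-\zeta)k$ after distributing the slack by the parameter $\gamma$; and $\zeta\le\epsilon$ is the same computation as the bound $z\le\epsilon n$ in the proof of Theorem~\ref{witheps}. To define $W$, call a triple $i$ \emph{rich} if $d^2 - |X_i||Y_i| \le \tfrac{\gamma^2-1}{\gamma}\cdot(\text{its share of the budget})$ (exact threshold chosen to make the constants in (c) come out); put into $W$ the vertices of all non-rich triples, so that $|W| \le \gamma(\epsilon-\zeta)dk$ follows by a counting/averaging argument against the total deficiency budget, using that each triple has at most $2d$ vertices. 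For (c): in a rich triple, a vertex $w\in X_i$ that is not conforming fails to be adjacent to all of $Y_i\cup Z_i$, and each such non-adjacency is one unit of the ``missing edge'' quantities $\sum_i|E(X_i,Y_i)|$-deficit and $\sum_i|E(X_i\cup Y_i,Z_i)|$-deficit that also appear on the good side of the Lemma~\ref{techlem} / Theorem~\ref{key} inequality; but since $|Y_i\cup Z_i|$ can be as small as about $d-|X_i|$, we must use that in a rich triple $|X_i|,|Y_i|$ are both close to $d$, so one missing adjacency for $w$ is cheap only if $w$ has many missing adjacencies — hence a convexity/worst-case argument bounds the number of non-conforming $w$ in rich triples by a constant times (edge-deficiency budget $+$ $|W|$-related slack), giving the first displayed bound in (c); the second follows by substituting the bound on $|W|$ from (b).

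The main obstacle I expect is getting the constants in (c) to be exactly $\tfrac{\gamma^2-1}{\gamma^2-3\gamma}$ and $\tfrac{1+\gamma}{\gamma-3}$ rather than merely $O_\gamma(1)$: this requires choosing the ``rich'' threshold and the split of the budget between $W$, the $Z_i$ terms, and the edge-deficiency terms with some care, and then tracking the worst case in which a non-conforming vertex in a rich triple has as few missing adjacencies as the richness threshold allows. Concretely, if a rich triple has $|X_i|\ge d - \delta_i$ with $\sum$ of the relevant $\delta_i$'s controlled by the budget, then a non-conforming $w\in X_i$ contributes at least $1$ but at most $|Y_i|+|Z_i|\le d$ to the edge-deficiency, and the ratio between ``number of bad vertices'' and ``edge-deficiency they generate'' is worst when each bad vertex misses exactly one neighbour — so one must show that configuration is itself budget-limited, which is where the $\gamma>3$ hypothesis enters (it guarantees $\gamma^2-3\gamma>0$, i.e.\ the linear program defining the extremal split is feasible). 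Everything else — the explosion process, Property (a), the algebraic rearrangement of Lemma~\ref{techlem}, the bound $\zeta\le\epsilon$ — is routine and parallels material already in Sections~\ref{sec-key} and~\ref{sec-stability}.
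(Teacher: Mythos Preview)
Your plan departs from the paper's in a fundamental way, and I believe the departure creates a real gap. You propose to run the \emph{unmodified} explosion process of Theorem~\ref{key}, obtain a basic partition, and then post-process: declare some triples ``non-rich'' and dump them into $W$, then bound the non-conforming vertices in the remaining triples by the edge-deficiency $Q=\sum_i(|X_i||Y_i|-|E(X_i,Y_i)|)$. The paper instead \emph{modifies the process itself}: at each step it tests whether the candidate triple $(X,Y,Z)$ contains a ``low'' vertex $w$ (one with fewer than $\theta d$ neighbours outside $Y\cup Z$, where $\theta=\tfrac{\gamma-3}{\gamma+1}$) that is not adjacent to all of $Y$. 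If so, the process performs two explosions and sends all killed vertices to $W$; only if no such $w$ exists is the triple recorded. Thus $W$ is built \emph{during} the process, and every recorded triple carries the structural guarantee that any vertex of $X_i$ missing a neighbour in $Y_i$ is high.

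This guarantee is what your approach lacks, and it is essential for (c). The bound you can extract from Lemma~\ref{techlem} (equivalently Theorem~\ref{witheps}(ii)) is $Q\leq\epsilon d n/2$, which in the parameterisation $n=(2-\epsilon)dk$ is of order $\epsilon d^2k$. But each non-conforming vertex need only contribute $1$ to $Q$: consider a triple with $|X_i|=|Y_i|=d$, $Z_i=\emptyset$, and bipartite structure $K_{d,d}$ minus a matching of size $d-1$ (so that $x_i,y_i$ retain full degree). Then $2(d-1)$ vertices are non-conforming while the edge-deficiency is only $d-1$. So ``non-conforming $\leq Q$'' yields at best $|U|=O(\epsilon d^2k)$, a factor of $d$ too large for the claimed $|U|=O_\gamma(\epsilon dk)$. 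Your ``rich'' threshold constrains only the sizes $|X_i|,|Y_i|$, not the edge structure, so it does not rule out this scenario; and the invocation of $\gamma>3$ via a linear program does not supply the missing factor of $d$. In the paper's process, by contrast, every non-conforming vertex in a recorded triple is high and therefore contributes at least $\theta d$ (not $1$) to the pink-edge count, which is what converts the $\epsilon d^2k$ budget into an $\epsilon dk$ bound on $|U|$. The condition $\gamma>3$ is exactly what makes $\theta>0$.
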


To interpret this statement it helps to compare it with
Theorem~\ref{key}. In Theorem~\ref{key2} we have an additional part $W$ in the
partition, consisting of ``junk'' over which we have little control,
but by (b) its size is small as long as $\epsilon$ is small. Property
(a) is the same as in 
Theorem~\ref{key}. The subgraph of $G$ induced by the set of
conforming vertices in each $X_i\cup 
Y_i\cup Z_i$ contains a spanning complete bipartite graph, and
Property (c) tells 
us that most vertices are conforming, provided also that $\gamma$ is
not too close to 3. Hence under these conditions $G$ contains $k_0$
complete bipartite graphs, whose union contains most of the vertices
of $G$, and where $k_0$ is close to $k$.

The proof of Theorem~\ref{key2} is not difficult, and is quite similar
to the proof of Theorem~\ref{key}. However, including it
here would add a significant amount of technical detail, so we 
omit the full proof in favour of the following sketch.

\begin{proof}[Sketch of proof]
Since $\eta(G)<k+1$ we know that every sequence of
edge deletions and explosions taking $G$ to the empty graph has length
at most $k$. We define a specific sequence in a step-by-step fashion,
and derive the structural conclusion about $G$ from the fact that it
has at most $k$ explosion steps. 
As before, the
operation of {\it reducing} a graph $G$ is to 
delete edges of $G$ one by one until no further edges are
  deletable. Then in the resulting {\it reduced} graph, every edge is
  explodable.

  Set $\theta=\frac{\gamma -3}{1+\gamma}$. Then
  $\gamma=\frac{3+\theta}{1-\theta}$. 
\begin{enumerate}
\item Reduce $G$. Set $i:=1$.
\item If graph is empty, stop. If not, then it will contain no
  isolated vertices. Choose an edge $xy$ whose explosion
  kills the smallest 
  possible number of vertices. Set $Z:=N(x)\cap N(y)$ in the current
  graph, and set $X:=N(y)\setminus Z$ and $Y:=N(x)\setminus Z$. 

%  IF $|X\cup Y\cup Z|\leq \frac{3d+\theta d}2$ then explode $xy$. Put
%  all vertices lost in the explosion into $W$.

%  ELSE:  
  Call a vertex $w\in X$ (respectively $Y$) {\it high} if it has at
least $\theta d$ 
neighbours outside $N(x)=Y\cup Z$ (respectively $N(y)=X\cup Z$), and
{\it low} otherwise. 

IF there exists a low vertex $w\in X$ (respectively in $Y$) that has a
non-neighbour $u\in Y$ (respectively $u\in X$):
\begin{itemize}
\item explode $xu$ (respectively $yu$) and reduce,
\item explode $wv$ for some neighbour $v$ of $w$ and reduce,
\item put all vertices lost in the two explosions into $W$.
\end{itemize}

  ELSE set $x_i:=x$ and $y_i:=y$. Set $X_i:=X$, $Y_i:=Y$ and $Z_i:=Z$.
    Explode
    $x_iy_i$ and reduce.
\item Increment $i$ and repeat from (2). 
\end{enumerate}
The rest of the proof consists of the analysis of this procedure, and
how it leads to Conclusions (a--c).
\end{proof}

In just the same way that Theorem~\ref{key} (via
Theorem~\ref{witheps}) combined with 
Theorem~\ref{exists-IT} led to the IT stability result
Theorem~\ref{stab-IT}, Theorem~\ref{key2} combined with
Theorem~\ref{exists-IT} leads to an alternative IT stability
result. This one asserts that if a graph $G$ does not have an
IT with respect to a $t$-thick partition $\cP$, where $\Delta(G)$ is close to
$t/2$, then for some
subset $S$ of blocks of $\cP$, the graph $G_S$ contains
a disjoint union of complete bipartite graphs that spans almost all of
$V(G_S)$. 

%\section*{Acknowledgments}

\section{Appendix}\label{sec-appendix}
Here we describe (using Lemma~\ref{join-lemma}) the
construction given in~\cite{GrKaTrWa} of a graph $G'$ 
with vertex partition $\cP'$ that has the properties (a$'$), (b), (c)
and (d) detailed in Section~\ref{sec-construct}.

Let $(\alpha,\beta)$ be any
pair with $\alpha\geq\beta>1/4$.
We fix $t \ge 1$, and a sequence $d_1 < d_2 < \cdots < d_k = \min\{t,
\lfloor\alpha t\rfloor\}$ of increasing  
positive integers. We will indicate the conditions we will need on $t$
and the sequence as our argument progresses. We inductively define
graphs $G_1,
G_2, \ldots G_k$ and partitions $\mathcal{P}_1, \mathcal{P}_2, \ldots,
\mathcal{P}_k$ such that each $G_i$ has no IT with respect to
$\cP_i$. Each $G_{i+1}$ is obtained from $G_i$ by adding a collection
of components, each of which is a copy of the
star $K_{1,d_{i+1}}$. As in Section~\ref{sec-construct} we refer
  to the singleton set formed by the centre vertex of $K_{1,d_{i+1}}$
    as its $A$-side, and its set of $d_{i+1}$ leaves as its $B$-side.
    We start with $G_1$ being the disjoint 
union of $t$ copies of $K_{1, d_1}$. The partition $\mathcal{P}_1$ has
as blocks each of the $B$-side blocks (each of size $d_1$), plus one
more block of size $t$ (the \emph{initial} block) that is
the union of all the $A$-side blocks. It
is easy to see that $G_1$ has no IT with respect to
$\mathcal{P}_1$. (This fact can also be derived by applying Lemma
\ref{join-lemma} successively to $K_{1,d_1}$.)  

Set $d_0=0$, $G_0=\emptyset$ and $\cP_0=\emptyset$. Suppose $1\leq j<k$ and that
we have defined $G_j$ and $\mathcal{P}_j$ such that the following hold.
\begin{itemize}
\item[(i)] $G_j$ has no IT with respect to $\cP_j$.
\item[(ii)] Any block $U$ of $\cP_j$ with size different from $t$ 
  satisfies $U\cap
  V(G_{j-1})=\emptyset$ and $|U|=d_j$, and each $u\in U$ has degree 1
  in $G_j$. These are called the \emph{terminal} blocks of $\cP_j$.
  \item[(iii)] %The initial block has average degree $d_1$. For $j\geq 2$,
    Any block $U$ of $\cP_j$ with $|U|=t$ 
    that contains a vertex of  
    $V(G_{j})\setminus V(G_{j-1})$ has average degree 
    $\frac{1}{t}(d_{j-1} + (t - d_{j-1})d_{j})$ in $G_j$ with respect to $\cP_j$. 
\end{itemize}
We define $G_{j+1}$ and
$\mathcal{P}_{j+1}$ as follows. We will essentially be adding a new
``layer" of star components $K_{1,d_{j+1}}$. By (ii) we know
that all blocks of $\mathcal{P}_j$ have size $t$ except the terminal
blocks of $\cP_j$, each of which has size
$d_j$. Fix one terminal block $X$, and let $X_1 = X$. With
Lemma~\ref{join-lemma} in mind, for each $i = 1, \ldots, t - d_j$ in
sequence, we add to 
the current graph a copy of $K_{1, d_{j+1}}$, distribute all vertices
of $X_i$ into the $A$-side of $K_{1, d_{j+1}}$ to form a new block
$X_{i+1}$ of size $|X_i| + 1$, and let the $B$-side of $K_{1,
  d_{j+1}}$ form one more new block. 
(This $B$-side of $K_{1, d_{j+1}}$ will become a
terminal block of the partition $\cP_{j+1}$.) For each $i$,
Lemma~\ref{join-lemma} 
guarantees that the new graph has no IT with respect to the 
new partition. Repeating this step until $i$ reaches $t-d_j$, we
obtain $|X_{t - d_j + 1}| = t$, 
so this finishes the process of enlarging $X$ to size $t$. We repeat this
procedure for every terminal block $X$ of $\cP_j$, creating a
new layer of star components $K_{1, d_{j+1}}$, whose $B$-sides are the
terminal blocks of the new partition $\cP_{j+1}$. We denote the new
graph by $G_{j+1}$.

Next we verify (i--iii) for $(G_{j+1},\cP_{j+1})$. 
By repeated applications of Lemma~\ref{join-lemma} as indicated, we
know that $G_{j+1}$ has no IT with respect to
$\mathcal{P}_{j+1}$. Hence (i) holds for $(G_{j+1},
\mathcal{P}_{j+1})$. To check (ii), observe that by (ii) for $(G_{j},
\mathcal{P}_{j})$ and by our construction, the 
only blocks that are not now of size $t$ are the $B$-side blocks of the
newly added layer of copies of  $K_{1, d_{j+1}}$, which all 
have size $d_{j+1}$, and their vertices all have degree 1. Thus $(G_{j+1},
\mathcal{P}_{j+1})$ satisfies (ii). To verify (iii), the blocks $U$ of
$\cP_{j+1}$ with $|U|=t$ that contain a vertex of  
    $V(G_{j+1})\setminus V(G_{j})$ are precisely those that were previously a
terminal block $X$ of $\cP_j$. By Property (ii) of
$(G_j,\cP_j)$ we know that $|X|=d_j$ and each vertex of $X$ has degree
1 in $G_j$. Our construction adds $t-d_j$ vertices to $X$, each of degree
$d_{j+1}$, to obtain the class $U$ of size $t$. Hence the average
degree of $U$ is $\frac{1}{t}(d_j + (t - d_j)d_{j+1})$, verifying
(iii) for $(G_{j+1},\mathcal{P}_{j+1})$.

Let $G' = G_k$ and $\mathcal{P}' = \mathcal{P}_k$. 
Observe that $G'$ has maximum degree $d_k=\min\{t,\lfloor\alpha t\rfloor\}
\le \alpha t$, so Property (b) is satisfied. Property (d) is given by
(i). To ensure that Property (c) is satisfied, we use the following lemma
from~\cite{GrKaTrWa}. 

\begin{lem} \label{sequence-lemma}
For every $\beta > \frac{1}{4}$, there exists $t_0 \ge 1$ such that
for all $t \ge t_0$, there exists a sequence $0=d_0<d_1 < d_2 < \cdots <
d_k = \min\{t,\lfloor\alpha t\rfloor\}$ of integers such that for
all $j \ge 0$, 
\begin{align*}
	\frac{1}{t}(d_j + (t - d_j)d_{j+1}) \le \beta t.
\end{align*}
\end{lem}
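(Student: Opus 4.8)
The plan is to build the sequence greedily, starting from $d_0 = 0$ and at each step taking $d_{j+1}$ as large as the target constraint permits, but capped at $N := \min\{t, \lfloor \alpha t \rfloor\}$. Precisely, having defined $d_j$ with $d_j < N$ (so that $t - d_j \ge 1$, since $d_j < N \le t$), I would set
\[
d_{j+1} \;=\; \min\!\left\{\, N,\ \left\lfloor \frac{\beta t^2 - d_j}{t - d_j} \right\rfloor \,\right\},
\]
and stop as soon as $d_j = N$. In either case of the minimum one has $d_{j+1} \le (\beta t^2 - d_j)/(t - d_j)$, i.e. $d_j + (t - d_j) d_{j+1} \le \beta t^2$, which is exactly the bound $\tfrac1t\bigl(d_j + (t - d_j) d_{j+1}\bigr) \le \beta t$ demanded by the lemma. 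So the entire content of the proof is to verify that this recursion produces a \emph{strictly increasing} sequence of integers that actually \emph{reaches} $N$.

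First I would fix $t_0$ so that $1/t_0 \le \beta - 1/4$ (possible since $\beta > 1/4$), enlarging it if necessary so that $N \ge 1$, and then take $t \ge t_0$. The key step is to show $d_{j+1} \ge d_j + 1$ whenever $d_j < N$. Writing $s = d_j$, the elementary inequality $s(t - s) \le t^2/4$ gives
\[
(s + 1)(t - s) + s \;=\; s(t - s) + t \;\le\; \frac{t^2}{4} + t \;\le\; \beta t^2 ,
\]
hence $s + 1 \le (\beta t^2 - s)/(t - s)$; since $s + 1$ is an integer, $s + 1 \le \lfloor (\beta t^2 - s)/(t - s) \rfloor$. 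As also $s + 1 \le N$, the minimum defining $d_{j+1}$ is at least $s + 1$. Thus the sequence strictly increases as long as $d_j < N$, and since it always satisfies $d_j \le N$, it must reach $N$ after finitely many steps, producing $0 = d_0 < d_1 < \cdots < d_k = N$ as required.

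I do not expect a genuine obstacle here: the single idea is that $s(t-s)$ never exceeds $t^2/4$ while $\beta t^2 > t^2/4$ once $t \ge t_0$, so there is always slack to push $d_{j+1}$ strictly past $d_j$. The only points needing mild attention are that $t - d_j \ge 1$ throughout (so the division is legitimate) and that the clause ``for all $j \ge 0$'' should be read as ranging over the finitely many indices for which $d_{j+1}$ is defined. I would also remark, to make clear how soft the statement is, that even the trivial choice $d_j = j$ for $0 \le j \le N$ works, since $\tfrac1t\bigl(j + (t - j)(j+1)\bigr) = (j+1) - j^2/t = 1 + j(t-j)/t \le 1 + t/4 \le \beta t$; the greedy rule above is preferable only if one wants the length $k$ (hence the number of layers added in the construction) to be small.
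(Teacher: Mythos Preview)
Your proof is correct and rests on the same elementary inequality as the paper's: with $s=d_j$, one has $s(t-s)\le t^2/4$, so once $t$ is large enough that $t/4+1\le\beta t$ the increment-by-one step $d_{j+1}=d_j+1$ already satisfies the constraint. The paper in fact presents exactly your ``trivial'' version (with a cosmetic first jump to $d_1=\lfloor\beta t\rfloor$, which your greedy rule reproduces since $\lfloor(\beta t^2-0)/(t-0)\rfloor=\lfloor\beta t\rfloor$), so your greedy formulation is a mild generalization of the same argument rather than a different route.
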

For example, we may set $d_1=\lfloor\beta t\rfloor$ and, as noted
in~\cite{GrKaTrWa}, one can simply take $d_j = d_{j-1}+1$ for 
every $j \in \{2, \ldots, d_k-d_1+1\}$. Then we get
\begin{align*}
	\frac{1}{t}(d_j + (t - d_j)d_{j+1}) &= \frac{1}{t}(d_j + (t -d_j
        )(d_j+1))= \frac{1}{t}(t + d_j(t - d_j)).% \\ 
	%&\le \frac{1}{t}\left(t + \frac{t^2}{4}\right) \le \beta t,
\end{align*}
Since $d_j\leq d_k= \min\{t,\lfloor\alpha
t\rfloor\}\leq t$, this expression is at most $\frac{1}{t}\left(t +
\frac{t^2}{4}\right)$. This is bounded above by $\beta t$ provided
$t$ is sufficiently large, since $\beta>1/4$. 

We choose $t_0$ and a sequence such that Lemma~\ref{sequence-lemma}
holds for our given $\beta$, and apply our construction with this
sequence and with $t\geq t_0$. Any terminal block of $\cP'=\cP_k$ has
average degree 
1 by (ii) with $j=k$. All remaining blocks in $\cP'$ have
size $t$ and are of the 
type in (iii) for exactly one value of $j$, and hence by (iii) and
Lemma~\ref{sequence-lemma} each has average degree at most $\beta
t$. Hence (c) holds for $(G',\cP')$.

Finally, to check (a$'$) we again note that any block $U$ that has
size less than $t$ is a 
terminal block of $\cP_k$, and hence satisfies (ii). Then necessarily
$U$ has size $d_k=\lfloor\alpha t\rfloor<t$, and by (ii) consists of
vertices of degree one. This verifies (a$'$) as required. 

%Now, at a given step $j$, a $B$-side block $X$ has size $d_j$ and
%vertices of degree $1$, and then we are enlarging $X$ by adding $t -
%d_j$ vertices of degree $d_{j+1}$ to it. This means that this enlarged
%block at the end of step $j$ has average degree $\frac{1}{t}(d_j + (t
%- d_j)d_{j+1}) \le \beta t$ by Lemma \ref{sequence-lemma}. Lastly, the
%$B$-side blocks in the final layer of $G'$ all have average degree
%$1$. Therefore, both Properties (b) and (c) are satisfied for $G'$ and
%$\mathcal{P}'$. 

%If $\alpha \ge 1$, then by (ii), Property (a) is also satisfied for
%$\mathcal{P}'$, since $d_k=\min\{t,\lfloor\alpha t\rfloor\}) = t$. In this
%case we can take $G = G'$ and $\mathcal{P} = \mathcal{P}'$ for Theorem
%\ref{tight-construction}.  

\end{document}